\documentclass[sts]{imsart}

\RequirePackage[OT1]{fontenc}
\usepackage{amsthm,amsmath,natbib,amssymb,bbm,mathtools}
\RequirePackage[colorlinks,citecolor=blue,urlcolor=blue]{hyperref}
\usepackage{cleveref}

\startlocaldefs
\numberwithin{equation}{section}
\theoremstyle{plain}
\newtheorem{thm}{Theorem}[section]
\newtheorem{lem}[thm]{Lemma}
\endlocaldefs

\begin{document}

\begin{frontmatter}
\title{On the probability that two random integers are coprime}
\runtitle{Probability of coprime integers}

\begin{aug}
\author{\fnms{Jing} \snm{Lei}\thanksref{t1}\ead[label=e1]{jinglei@andrew.cmu.edu}}
\and
\author{\fnms{Joseph B.} \snm{Kadane}\ead[label=e2]{kadane@stat.cmu.edu}}

\thankstext{t1}{Research is partially supported by NSF grant DMS-1553884.}
\runauthor{Lei and Kadane}

\affiliation{Carnegie Mellon University}

\address{132 Baker Hall, Carnegie Mellon University, Pittsburgh, PA, 15213. \printead{e1,e2}.}

\end{aug}

\begin{abstract}
We show that there is a non-empty class of finitely additive probabilities on $\mathbb N^2$ such that for each member of the class, each set with limiting relative frequency $p$ has probability $p$. Hence, in that context the probability that two random integers are coprime is $6/\pi^2$. We also show that two other interpretations of ``random integer," namely residue classes and shift invariance, support any number in $[0, 6/\pi^2]$ for that probability. Finally, we specify a countably additive probability space that also supports $6/\pi^2$.
\end{abstract}

\begin{keyword}
\kwd{number theory}
\kwd{coprime}
\kwd{uniform distribution}
\kwd{finitely additive probability}
\end{keyword}

\end{frontmatter}

\section{Introduction}\label{sec:background}
For two integers $a,b$, let ${\rm gcd}(a,b)$ be the largest positive integer that evenly divides both $a$ and $b$.
It is a well-established result in number theory that
\begin{equation}\label{eq:one}
\lim_{ n \rightarrow \infty } \frac{\#\left\{(a,b)\in [n]^2,~{\rm gcd}(a,b)=1\right\}}{n^2} = \frac{6}{\pi^2}\,.
\end{equation}
\citep[Theorem 331]{hardy-wright2008}, where $[n]=\{1,2,...,n\}$. They then write 
``it is natural" to interpret \eqref{eq:one} as a probability, and conclude (Theorem 332) that 
\begin{equation}\label{eq:claim}
\text{the \textit{probability} that two randomly chosen integers are co-prime is $6/\pi^2$.}
\end{equation}

The purpose of this paper is to reconcile \eqref{eq:claim} with modern probability theory. To do so, we first introduce the two main frameworks used in probability, and then apply them to \eqref{eq:claim}.

\subsection{A brief introduction to probability axioms} There are two different axiom systems for probability. The first, and most familiar, is countably additive. It starts with a triple of objects: $(\Omega, \mathcal B, P)$, where $\Omega$ is the basic set of objects, $\mathcal B$ is a $\sigma$-field of subsets of $\Omega$, and $P\{\cdot\}$ is a probability over elements of $\mathcal B$ satisfying
\begin{enumerate}
\item[(a)] $P \{B\} \geq 0$ for all $B \in \mathcal B$
\item[(b)] $P \{\Omega\} = 1$
\item[(c)] If $B_1, B_2,\ldots$ is a countable sequence of disjoint elements of $\mathcal B$, then
$$
P\left\{\bigcup^\infty_{i=1} B_i\right\} = \sum^\infty_{i=1} P\{B_i\}.
$$
\end{enumerate}

This is the axiom system advocated by \citet{kolmogorov1933}\footnote{For an English translation, see \citet{kolmogorov2018}.}. It has the advantage that conditional probabilities obey the tower property:
\begin{equation}\label{eq:j1}
E(X) = E\left[E(X | Y)\right],
\end{equation}
but the disadvantage that it is defined only on $\mathcal B$, and cannot, in general, be extended to the power set of $\Omega$ (\citet[][Section 3]{billingsley1995probability} shows 
the existence of non-measurable sets).

The second axiom system is finitely additive, replacing (c) above with the condition:
\begin{enumerate}
\item[(c$^\prime$)] If $B_1,B_2,\ldots, B_n$ is a finite collection of disjoint subsets of $\Omega$, then
$$
P\left\{\bigcup^n_{i=1} B_i\right\} = \sum^n_{i=1} P\{B_i\}.
$$
\end{enumerate}
This has the advantage that a finitely additive distribution can be extended to the power set, but the disadvantage that the tower property does not, in general, hold for them. This sense of probability was particularly advocated by \citet{deFinetti1937prevision}\footnote{For an English translation, see \citet{kyburg1980studies}.}.

Every countably additive probability is finitely additive, but not conversely.

\subsection{Application to coprime integers}
Let $G = \{(a,b) \in \mathbb N^2: {\rm gcd}(a,b) = 1\}$ be the set of pairs of integers that are coprime. We explore the probability of $G$ under various different settings, beginning with limiting relative frequency. Limiting relative frequency cannot be countably additive, since the limiting relative frequency of each pair of integers is zero, but the countable union of them, $\mathbb N^2$, has limiting relative frequency one. Hence, we consider first finitely additive probabilities on $\mathbb N^2$, including limiting relative frequency, in Section~\ref{sec:finitely}. Sections~\ref{sec:rel_freq}, \ref{sec:residue-classes} and \ref{sec:shift-invariance} apply finitely additive probability on $\mathbb N^2$ respectively to limiting relative frequency, residue classes and shift invariance.  Section~\ref{sec:countably} proposes a countably additive probability specification that also supports $P\{G\} = 6/\pi^2$. 

\section{Finitely additive probability}\label{sec:finitely}
\subsection{General background}
Allowing 
probabilities that are finitely but not countably additive permits 
extension to the power set at the cost that many other useful results 
that are true for countable additive probabilities do not hold.
The following theorem, from \citet{kadane-ohagan1995} (relying on results of \citet{rao-rao1983}) gives a necessary and sufficient condition for such an extension of a finitely additive probability.

\begin{thm}\label{thm:three}
Let $\mathcal{C}$ be any collection of subsets of a set $\Omega$ such that $\Omega \in \mathcal C$. Let $\mu$ be a nonnegative real function defined on $\mathcal{C}$ such that $\mu(\Omega) = 1$. Then $\mu$ can be extended to a finitely additive probability on all subsets of $\Omega$ if and only if, for all collections of sets $A_1, \ldots, A_a$ and $B_1,\ldots, B_b$ in $\mathcal{C}$,
\begin{equation}\label{eq:two}
\sum^a_{i=1} I_{A_i} \leq \sum^b_{j=1} I_{B_j}
\end{equation}
implies that
\begin{equation}\label{eq:three}
\sum^a_{i=1} \mu(A_i) \leq \sum^b_{j=1} \mu(B_j),
\end{equation}
where $I_A$ is the indicator function of $A$.
\end{thm}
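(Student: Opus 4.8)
The plan is to prove the two implications separately: the forward (``only if'') direction is a short formal computation, while the converse carries all the content and will go through a Hahn--Banach extension argument. \emph{Necessity.} Suppose $\mu$ extends to a finitely additive probability $P$ on $2^{\Omega}$. By finite additivity, $P$ induces a positive, hence monotone, linear functional $\Lambda_{P}$ on the space of simple functions on $\Omega$ with $\Lambda_{P}(I_{S})=P(S)$. Applying $\Lambda_{P}$ to both sides of $\sum_{i=1}^{a}I_{A_{i}}\le\sum_{j=1}^{b}I_{B_{j}}$ and using linearity together with $P(A_{i})=\mu(A_{i})$ and $P(B_{j})=\mu(B_{j})$ yields \eqref{eq:three}.

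\emph{Sufficiency.} I would reduce the problem to constructing a positive linear functional $\Lambda$ on the space $\mathcal S$ of all simple functions on $\Omega$ with $\Lambda(I_{\Omega})=1$ and $\Lambda(I_{A})=\mu(A)$ for every $A\in\mathcal C$; then $P(S):=\Lambda(I_{S})$ is a finitely additive probability on $2^{\Omega}$ extending $\mu$, since $P\ge 0$, $P(\Omega)=1$, and $I_{S\cup S'}=I_{S}+I_{S'}$ for disjoint $S,S'$ forces $P(S\cup S')=P(S)+P(S')$. Let $L\subseteq\mathcal S$ be the linear span of $\{I_{A}:A\in\mathcal C\}$, which contains $I_{\Omega}$. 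The key step is the claim that \emph{for any $C_{1},\dots,C_{m}\in\mathcal C$ and any $c\in\mathbb R^{m}$ with $\sum_{k}c_{k}I_{C_{k}}\ge 0$ pointwise, one has $\sum_{k}c_{k}\mu(C_{k})\ge 0$.} To prove it I would note that the inequality $\sum_{k}c_{k}I_{C_{k}}\ge 0$ depends only on which index sets $\{k:x\in C_{k}\}$ actually arise as $x$ ranges over $\Omega$; collecting these sets as the rows of a $0$--$1$ matrix $M$, the inequality becomes $Mc\ge 0$, so $c$ lies in a rational polyhedral cone, which is the set of nonnegative combinations of finitely many rational generators. It therefore suffices to treat rational $c$, and, after clearing denominators, $c\in\mathbb Z^{m}$; for such $c$, separating its coordinates by sign rewrites $\sum_{k}c_{k}I_{C_{k}}\ge 0$ as $\sum_{i}I_{A_{i}}\le\sum_{j}I_{B_{j}}$ with $A_{i},B_{j}$ drawn from among the $C_{k}$ with appropriate multiplicities, and then \eqref{eq:two}$\Rightarrow$\eqref{eq:three} delivers $\sum_{k}c_{k}\mu(C_{k})\ge 0$. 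Applying the claim to $\pm(c,-d)$ shows that $\sum_{k}c_{k}I_{C_{k}}=\sum_{l}d_{l}I_{D_{l}}$ forces $\sum_{k}c_{k}\mu(C_{k})=\sum_{l}d_{l}\mu(D_{l})$, so $T\bigl(\sum_{k}c_{k}I_{C_{k}}\bigr):=\sum_{k}c_{k}\mu(C_{k})$ is a well-defined linear functional on $L$ with $T(I_{\Omega})=1$, and the claim itself says $T$ is monotone on $L$.

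To finish, I would set $p(f)=\inf\{T(g):g\in L,\ g\ge f\text{ pointwise}\}$ for $f\in\mathcal S$. Comparing $f$ with $\pm\|f\|_{\infty}I_{\Omega}\in L$ and using monotonicity of $T$ shows $p$ is finite; $p$ is evidently sublinear and agrees with $T$ on $L$, so $T\le p$ there. By the Hahn--Banach theorem $T$ extends to a linear functional $\Lambda$ on $\mathcal S$ with $\Lambda\le p$; for $f\ge 0$ this gives $\Lambda(-f)\le p(-f)\le T(0)=0$, hence $\Lambda\ge 0$, while $\Lambda$ still restricts to $\mu$ on $L$, so $\Lambda$ is the functional the reduction calls for. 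I expect the one genuinely non-formal point to be the boxed claim, and within it the passage from the integer-multiplicity hypothesis \eqref{eq:two}--\eqref{eq:three} to arbitrary real coefficients $c$; this is exactly where the finite-dimensional fact that a rational polyhedral cone is finitely generated over $\mathbb Q$ does the work. Everything else is bookkeeping plus the standard Hahn--Banach construction of a finitely additive extension, in the spirit of \citet{rao-rao1983}.
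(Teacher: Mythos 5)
Your proof is correct, but note that the paper itself offers no proof of this statement: it is imported verbatim from \citet{kadane-ohagan1995}, who in turn rely on \citet{rao-rao1983}, so there is no in-paper argument to compare against. What you have written is a self-contained reconstruction along the standard lines of that literature: necessity via the positive linear functional (integration of simple functions against the finitely additive extension), and sufficiency by building the monotone functional $T$ on the span $L$ of $\{I_A:A\in\mathcal C\}$ and extending it by Hahn--Banach under the sublinear majorant $p(f)=\inf\{T(g):g\in L,\ g\ge f\}$. The one step that genuinely needs care --- passing from the integer-multiplicity hypothesis \eqref{eq:two}$\Rightarrow$\eqref{eq:three} to arbitrary real coefficients in the positivity claim for $T$ --- you handle correctly: there are only finitely many patterns $\{k:x\in C_k\}\subseteq[m]$, so the admissible coefficient vectors form a rational polyhedral cone, which by Minkowski--Weyl is generated by finitely many rational (hence, after clearing denominators, integer) vectors, and linearity of $c\mapsto\sum_k c_k\mu(C_k)$ transports the integer case to the general one. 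Two small points worth making explicit if you write this up: the hypothesis must be read as permitting $a=0$ or $b=0$ (empty collections), which is needed when all coefficients of a generator share one sign; and well-definedness of $T$ on $L$ does require the two-sided application of the positivity claim, as you indicate. With those caveats the argument is complete and is, in substance, the proof that the cited sources give.
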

A second result, also in \citet{kadane-ohagan1995}, gives upper and lower bounds on the probability of a set $D$ (not in general in $\mathcal{C}$):

\begin{thm}\label{thm:four}
Let $\mathcal{C}$ be any collection of subsets of a set $\Omega$ such that $\Omega \in \mathcal{C}$. Let $\mu$ be a nonnegative real function defined on $\mathcal{C}$ such that $\mu(\Omega) = 1$, and let $\mu$ be extendable to a finitely additive probability on all subsets of $\Omega$. Let $\mathcal{M}$ be the set of such extensions. Consider a further set $D \subset \Omega$. Then
$$
\{\mu(D): \mu \in \mathcal{M}\} = [\ell(D,\mathcal M), u(D,\mathcal M)],
$$
where $\ell(D,\mathcal M)$ ($u(D,\mathcal M)$) is the supremum (infimum) of
\begin{equation}\label{eq:four}
h^{-1} \left\{\sum^a_{i=1} \mu(A_i) - \sum^b_{j=1} \mu(B_j)\right\}
\end{equation}
over all $A_1, A_2, \ldots, A_a, B_1, B_2, \ldots, B_b \in \mathcal{C}$ and all $a, b, h = 1,2,3,\ldots,$ such that
\begin{equation}\label{eq:five}
\sum^a_{i=1} I_{A_i} - \sum^b_{i=1} I_{B_j} \leq(\geq)hI_D.
\end{equation}
\end{thm}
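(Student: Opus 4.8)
The plan is to prove the two inclusions $\{\mu(D):\mu\in\mathcal M\}\subseteq[\ell(D,\mathcal M),u(D,\mathcal M)]$ and $[\ell(D,\mathcal M),u(D,\mathcal M)]\subseteq\{\mu(D):\mu\in\mathcal M\}$ separately; together they give the claimed equality and, in particular, show that the set of achievable values is a genuine closed interval, so no separate convexity or compactness argument is needed. Write $\ell=\ell(D,\mathcal M)$ and $u=u(D,\mathcal M)$. A preliminary observation: taking $a=b=0$ and $h=1$ (so that $0\le I_D$) shows $\ell\ge0$, and taking $a=1$, $A_1=\Omega$, $b=0$, $h=1$ (so that $I_\Omega\ge I_D$) shows $u\le1$; hence both are finite.

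For the first inclusion, fix $\mu\in\mathcal M$. Since $\mu$ is itself a finitely additive probability defined on all subsets of $\Omega$, applying the ``only if'' direction of Theorem~\ref{thm:three} with $\mathcal C$ taken to be the full power set $2^\Omega$ shows that $\mu$ respects every pointwise inequality between finite sums of indicators (with sets allowed to repeat). In particular, whenever $\sum_{i=1}^a I_{A_i}-\sum_{j=1}^b I_{B_j}\le hI_D$ on $\Omega$, one has $\sum_{i=1}^a\mu(A_i)-\sum_{j=1}^b\mu(B_j)\le h\mu(D)$, hence $h^{-1}\{\sum_i\mu(A_i)-\sum_j\mu(B_j)\}\le\mu(D)$; taking the supremum over all such configurations yields $\ell\le\mu(D)$. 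The identical argument with ``$\le hI_D$'' replaced by ``$\ge hI_D$'' and the supremum replaced by an infimum yields $\mu(D)\le u$. Since $\mathcal M\ne\varnothing$, this already forces $\ell\le u$.

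For the reverse inclusion, fix any $v\in[\ell,u]$, enlarge the collection to $\mathcal C':=\mathcal C\cup\{D\}$, and extend $\mu$ to $\mathcal C'$ by declaring $\mu(D):=v$. It suffices to verify that $(\mathcal C',\mu)$ satisfies the hypothesis ``\eqref{eq:two} implies \eqref{eq:three}'' of Theorem~\ref{thm:three}: that theorem then produces a finitely additive probability on $2^\Omega$ that agrees with $\mu$ on $\mathcal C$ and assigns $D$ the value $v$, i.e.\ a member of $\mathcal M$ with $\mu(D)=v$. Any instance of \eqref{eq:two} built from members of $\mathcal C'$ can, after cancelling the copies of $I_D$ common to both sides, be written as $\sum_i I_{A_i}+pI_D\le\sum_j I_{B_j}+qI_D$ with $A_i,B_j\in\mathcal C$ and $p,q$ nonnegative integers of which at most one is positive. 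If $p=q=0$, the inequality involves only members of $\mathcal C$ and \eqref{eq:three} holds because $\mu$ is extendable on $\mathcal C$. If $q>0$ and $p=0$, then $\sum_i I_{A_i}-\sum_j I_{B_j}\le qI_D$, so the definition of $\ell$ as a supremum gives $\sum_i\mu(A_i)-\sum_j\mu(B_j)\le q\ell\le qv$, which rearranges to \eqref{eq:three}. If $p>0$ and $q=0$, then $\sum_j I_{B_j}-\sum_i I_{A_i}\ge pI_D$, which is an instance of the ``$\ge hI_D$'' configuration with the $A$'s and $B$'s swapped and $h=p$, so the definition of $u$ as an infimum gives $\sum_j\mu(B_j)-\sum_i\mu(A_i)\ge pu\ge pv$, which again rearranges to \eqref{eq:three}. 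Thus each $v\in[\ell,u]$ is attained, completing the proof.

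The routine parts are the order-preservation property of finitely additive integration invoked in the first inclusion and the bookkeeping of indicator multiplicities. The one step requiring genuine care is routing each instance of \eqref{eq:two} that involves $D$ to the correct extremal problem — the supremum defining $\ell$ when $D$ acquires a positive net coefficient on the right-hand side, the infimum defining $u$ (after interchanging the $A$'s and $B$'s) when it does so on the left — and invoking $v\in[\ell,u]$ in the direction that gives \eqref{eq:three} the correct sign; with that observation in hand, the reverse inclusion is just Theorem~\ref{thm:three} applied to $\mathcal C\cup\{D\}$.
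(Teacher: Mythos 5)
Your proof is correct. Note that the paper itself gives no proof of this statement --- it is quoted from \citet{kadane-ohagan1995} --- so there is nothing internal to compare against; your argument is the standard one and is essentially self-contained given Theorem~\ref{thm:three}. The forward inclusion is the necessity direction of Theorem~\ref{thm:three} applied to the power set, and the reverse inclusion correctly reduces to verifying the consistency condition for $\mathcal C\cup\{D\}$ with $\mu(D):=v$, with the case analysis on the net coefficient of $I_D$ routed to the right extremal quantity. Two trivial points you gloss over: the claim that $\ell$ and $u$ are finite really comes from the forward inclusion (every competitor in the supremum defining $\ell$ is bounded above by $\mu(D)\le 1$ for any fixed extension), not from the preliminary bounds $\ell\ge 0$, $u\le 1$ alone; and since the theorem's quantifiers require $a,b\ge 1$, configurations with $a=0$ or $b=0$ arising after cancellation should be padded with $A_{a+1}=B_{b+1}=\Omega$, which changes nothing. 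Neither affects the validity of the argument.
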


\subsection{Finitely additive uniform probabilities on $\mathbb N$}
While there is only one sense of uniformity on a finite set (each element has the same probability), the same is not true on $\mathbb{N}$. Several such senses have been studied in the literature.

\begin{enumerate}
  \item \emph{Limiting relative frequency.} Define 
  $$\mathcal C_F=\{C\subseteq \mathbb N:~\lim_{n\rightarrow\infty}\#(C\cap [n])/n\text{ exists}\}$$ 
  be the collection of subsets of $\mathbb N$ with a limiting relative frequency.  Then it is natural to require $\mu(C)=\lim_{n\rightarrow\infty}\#(C\cap [n])/n$ for $C\in\mathcal C_F$.  \cite{kadane-ohagan1995} proved that such a $\mu$ is extendable. We denote the collection of all such finitely additive measures by $\mathcal M_F$.
\item \emph{Shift invariance.} Another way of defining uniform measure on $\mathbb N$ is to require $\mu$ to be shift invariant. Formally, let $s:\mathbb N\mapsto\mathbb N$ be $s(x)=x+1$. Shift invariance requires $\mu(A)=\mu(s^{-1}(A))$ for all $A\subseteq \mathbb N$. Denote the set of finitely additive shift invariant probabilities by $\mathcal M_S$.
\item \emph{Residue class.} Let $\mathcal C_R$ be the residue class, consisting of sets of the form 
\begin{equation}\label{eq:R}
C=R_{j,k}=\{x:x\equiv j\mod k\}  
\end{equation}
 for some $j\in [k]-1$ and $k\in \mathbb N$.  Uniformity naturally requires that $\mu(R_{j,k})=k^{-1}$ for all $k\in\mathbb N$ and $j\in [k]-1$.  \cite{kadane-ohagan1995} proved that such a $\mu$ is extendable.  We denote the collection of all such extended finitely additive measures by $\mathcal M_R$.
\end{enumerate}
The results in \cite{kadane-ohagan1995} and \citet{schirokauer-kadane2007} jointly imply that that
$$
\mathcal M_F \subset \mathcal M_S \subset\mathcal M_R 
$$
and that each of these inclusions is strict.

\subsection{Finitely additive uniform probabilities on $\mathbb N^2$} Now we extend the three types of finitely additive uniform probabilities to $\mathbb N^2$, and present our main result for finitely additive uniform distributions. The proof of the main result and some intermediate claims, such as extendability, are deferred to later sections.
\begin{enumerate}
  \item \emph{Limiting relative frequency on $\mathbb N^2$.} Define
  \begin{equation}\label{eq:rel_freq_n2}
    \mathcal C_F^2 = \left\{C\subseteq \mathbb N^2: \lim_{n_1\wedge n_2\rightarrow\infty}\frac{\#(C\cap ([n_1]\times [n_2]))}{n_1n_2}\text{ exists}\right\}\,,
  \end{equation}
  and $\mu(C)$ be the limit in \eqref{eq:rel_freq_n2} for $C\in\mathcal C_F^2$.  \Cref{thm:rel_freq_ext} below ensures that $(\mathcal C_F^2,\mu)$ can be extended to $2^{\mathbb N^2}$.  Denote the collection of all such extensions by $\mathcal M_F^2$.
  \item \emph{Shift invariance on $\mathbb N^2$.}  For $j=1,2$, define $s_j:\mathbb N^2\mapsto\mathbb N^2$ as the shift function that increases the $j$th coordinate by one.  Denote $\mathcal M_S^2$ the set of finitely additive shift invariant probabilities on $\mathbb N^2$ (i.e., those satisfy $\mu(A)=\mu(s_j^{-1}(A))$ for all $A\subseteq \mathbb N^2$ and $j=1,2$).
  \item \emph{Residue class on $\mathbb N^2$.} Let $\mathcal C_R^2=\mathcal C_R\times \mathcal C_R$ be the residue class on $\mathbb N^2$ and define $\mathcal M_R^2$ be the set of finitely additive probabilities on $\mathbb N^2$ extended from $(\mathcal C_R^2,\mu)$ with $\mu(R_{j_1,k_1}\times R_{j_2,k_2})=(k_1k_2)^{-1}$ for all $R_{j_1,k_1},R_{j_2,k_2}\in\mathcal C_R$.
\end{enumerate}
The following lemma extends its counterpart in $\mathbb N$, with an almost identical proof.
\begin{lem}\label{lem:hiararchy_n2}
    $\mathcal M_F^2\subseteq \mathcal M_S^2 \subseteq \mathcal M_R^2\,.$
\end{lem}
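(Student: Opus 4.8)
The plan is to prove the two inclusions separately, starting with the elementary one $\mathcal M_S^2\subseteq\mathcal M_R^2$. A member of $\mathcal M_R^2$ is just a finitely additive probability on $2^{\mathbb N^2}$ whose restriction to $\mathcal C_R^2$ equals the prescribed function, so given $\mu\in\mathcal M_S^2$ it suffices to check that $\mu\bigl(R_{j_1,k_1}\times R_{j_2,k_2}\bigr)=(k_1k_2)^{-1}$ for all indices. Since $s_1^{-1}\bigl(R_{i,k_1}\times R_{j_2,k_2}\bigr)=R_{(i-1)\bmod k_1,\,k_1}\times R_{j_2,k_2}$, shift invariance in the first coordinate makes the $k_1$ sets $\{R_{i,k_1}\times R_{j_2,k_2}:i\in[k_1]-1\}$ all have the same $\mu$-measure; being disjoint with union $\mathbb N\times R_{j_2,k_2}=R_{0,1}\times R_{j_2,k_2}$, each therefore has measure $k_1^{-1}\mu\bigl(\mathbb N\times R_{j_2,k_2}\bigr)$. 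Repeating the argument with $s_2$ on the sets $\{\mathbb N\times R_{i,k_2}:i\in[k_2]-1\}$, which partition $\mathbb N^2$, gives $\mu(\mathbb N\times R_{j_2,k_2})=k_2^{-1}$, and the claim follows.

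For $\mathcal M_F^2\subseteq\mathcal M_S^2$, fix $\mu\in\mathcal M_F^2$; by symmetry it is enough to prove $\mu(A)=\mu\bigl(s_1^{-1}(A)\bigr)$ for every $A\subseteq\mathbb N^2$. Put $P=A\setminus s_1^{-1}(A)$ and $Q=s_1^{-1}(A)\setminus A$; these are disjoint, $A=(A\cap s_1^{-1}A)\cup P$ and $s_1^{-1}A=(A\cap s_1^{-1}A)\cup Q$, so by finite additivity the claim is equivalent to $\mu(P)=\mu(Q)$. Restricting to a horizontal slice $\{x_2=c\}$, the sets $P$ and $Q$ become exactly the one-dimensional objects attached to the $c$-th slice of $A$; hence on every slice $P$ and $Q$ omit consecutive first coordinates and their slice-counts in $[n_1]$ differ by at most one, so that $\bigl|\#(P\cap B)-\#(Q\cap B)\bigr|\le n_2$ for every box $B=[n_1]\times[n_2]$.

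The heart of the argument is to construct $D\in\mathcal C_F^2$ with $P\subseteq D\subseteq\mathbb N^2\setminus Q$, which I would do by padding slice by slice: inside each slice, greedily adjoin to the slice of $P$ first coordinates lying outside the slice of $Q$ so that the partial count tracks $n_1/2$. Because $P$ and $Q$ contain no two consecutive integers on any slice, one checks that the running count stays within an absolute constant of $n_1/2$ \emph{uniformly over the slice index}, so that $\#\bigl(D\cap([n_1]\times[n_2])\bigr)/(n_1n_2)\to 1/2$ as $n_1\wedge n_2\to\infty$, i.e.\ $D\in\mathcal C_F^2$. Put $C=Q\cup(D\setminus P)$. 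Then $\#(C\cap B)-\#(D\cap B)=\#(Q\cap B)-\#(P\cap B)$ is bounded in absolute value by $n_2$, so $C\in\mathcal C_F^2$ with the same two-dimensional density as $D$, whence $\mu(C)=\mu(D)$. Moreover $C\cap P=\emptyset$, $Q\subseteq C$, $P\subseteq D$, $D\cap Q=\emptyset$, and $C$ and $D$ coincide off $P\cup Q$, so a three-case pointwise check gives $2I_P+I_C\le I_{P\cup Q}+I_D$ on $\mathbb N^2$. Since $\mu$ is a finitely additive probability on all of $2^{\mathbb N^2}$, such an inequality between finite sums of indicators is inherited by $\mu$, giving $2\mu(P)+\mu(C)\le\mu(P\cup Q)+\mu(D)$ and hence $\mu(P)\le\tfrac12\mu(P\cup Q)$. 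The symmetric construction, interchanging the roles of $P$ and $Q$, yields $\mu(Q)\le\tfrac12\mu(P\cup Q)$; since $\mu(P)+\mu(Q)=\mu(P\cup Q)$, both inequalities are equalities and $\mu(P)=\mu(Q)$, which finishes the proof.

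The step I expect to be the real obstacle — and the only place where the proof is not a verbatim copy of the one-dimensional argument — is confirming that the padded set $D$ lies in $\mathcal C_F^2$: the limit defining $\mathcal C_F^2$ is the two-parameter limit $n_1\wedge n_2\to\infty$, so the slice-wise padding must be performed with control on the partial counts that does not deteriorate as the second coordinate grows, which is precisely what the "no two consecutive integers" property of $P$ and $Q$ on each slice provides. Everything else is either elementary or a direct transcription of the one-dimensional case.
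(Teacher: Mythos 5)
Your proof is correct. Note that the paper itself gives no argument for this lemma --- it simply asserts that the proof is ``almost identical'' to the one-dimensional case treated in Kadane--O'Hagan (1995) and Schirokauer--Kadane (2007) --- so you have supplied the details the authors omit, following essentially the intended route. The inclusion $\mathcal M_S^2\subseteq\mathcal M_R^2$ is the straightforward equipartition argument, and for $\mathcal M_F^2\subseteq\mathcal M_S^2$ your reduction to $\mu(P)=\mu(Q)$ via the interlacing of $P=A\setminus s_1^{-1}(A)$ and $Q=s_1^{-1}(A)\setminus A$ on each horizontal slice, the density-$1/2$ padded set $D$, and the pointwise inequality $2I_P+I_C\le I_{P\cup Q}+I_D$ all check out; you also correctly isolate the one genuinely two-dimensional issue, namely that the greedy padding must keep the slice counts within an absolute constant of $n_1/2$ uniformly in the slice index so that the two-parameter limit defining $\mathcal C_F^2$ exists, and the ``no two consecutive integers'' property of $P$ and $Q$ on each slice does deliver exactly that.
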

It is possible to also establish strict inclusions by considering direct products of the examples given in \cite{kadane-ohagan1995,schirokauer-kadane2007}.  Now we state our main result for finitely additive probabilities.
\begin{thm}\label{thm:main_additive}
  Let $G=\{(x,y)\in\mathbb N^2:{\rm gcd}(x,y)=1\}$ be the set of pairs of positive integers that are co-prime. Then
  \begin{align}
  \ell(G,\mathcal M^2_F)= u(G,\mathcal M^2_F)=u(G,\mathcal M^2_S)=u(G,\mathcal M^2_R)=6/\pi^2\,\label{eq:main_1}
  \end{align}
  and
  \begin{align}
    \ell(G,\mathcal M^2_S)=\ell(G,\mathcal M^2_R)=0\,,\label{eq:main_2}
  \end{align}
  where the numbers $u(G,\mathcal M)$, $\ell(G,\mathcal M)$ are defined in \Cref{thm:four}.
\end{thm}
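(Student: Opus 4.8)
The plan is to exploit the hierarchy $\mathcal M_F^2\subseteq\mathcal M_S^2\subseteq\mathcal M_R^2$ of \Cref{lem:hiararchy_n2}, which nests the value sets $\{\mu(G):\mu\in\mathcal M_F^2\}\subseteq\{\mu(G):\mu\in\mathcal M_S^2\}\subseteq\{\mu(G):\mu\in\mathcal M_R^2\}$, and to read $u(G,\mathcal M)=\sup_{\mu\in\mathcal M}\mu(G)$ and $\ell(G,\mathcal M)=\inf_{\mu\in\mathcal M}\mu(G)$ throughout (this is the conclusion of \Cref{thm:four} when $\mathcal M$ is the family of extensions of a collection, as for $\mathcal M_F^2$ and $\mathcal M_R^2$, and the natural reading for $\mathcal M_S^2$, whose value set is anyway a closed interval by weak-$\ast$ compactness of $\mathcal M_S^2$). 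It then suffices to prove three facts: (i) every $\mu\in\mathcal M_F^2$ has $\mu(G)=6/\pi^2$; (ii) every $\mu\in\mathcal M_R^2$ has $\mu(G)\le 6/\pi^2$; and (iii) some $\mu\in\mathcal M_S^2$ has $\mu(G)=0$. Granting these: (i) places $6/\pi^2$ in the smallest value set while (ii) caps the largest at $6/\pi^2$, so all three suprema equal $6/\pi^2$ and $\ell(G,\mathcal M_F^2)=6/\pi^2$; and (iii) together with $\mu(G)\ge 0$ forces $\ell(G,\mathcal M_S^2)=0$, hence $\ell(G,\mathcal M_R^2)=0$ by nesting and nonnegativity. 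This gives all of \eqref{eq:main_1}--\eqref{eq:main_2}.

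For (i) I would run the M\"obius/inclusion--exclusion argument behind \eqref{eq:one} in two parameters: $\#(G\cap([n_1]\times[n_2]))=\sum_{d\ge1}\mu_{\mathrm M}(d)\lfloor n_1/d\rfloor\lfloor n_2/d\rfloor$, where $\mu_{\mathrm M}$ is the M\"obius function; dividing by $n_1n_2$ and letting $n_1\wedge n_2\to\infty$ term by term (dominating the $d$th term by $1/d^2$) yields $\sum_{d\ge1}\mu_{\mathrm M}(d)/d^2=1/\zeta(2)=6/\pi^2$. Hence $G\in\mathcal C_F^2$ with limiting relative frequency $6/\pi^2$, so every member of $\mathcal M_F^2$---a nonempty class by \Cref{thm:rel_freq_ext}---assigns $G$ that value. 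For (ii), fix $\mu\in\mathcal M_R^2$ and a bound $P$, and set $H_P=\bigcup_{p\le P}\bigl(R_{0,p}\times R_{0,p}\bigr)$, the union over primes $p\le P$; since $G\cap H_P=\varnothing$, we get $\mu(G)\le 1-\mu(H_P)$. As the $p$ in $S$ are distinct primes, $\bigcap_{p\in S}\bigl(R_{0,p}\times R_{0,p}\bigr)=R_{0,m}\times R_{0,m}$ with $m=\prod_{p\in S}p$, an element of $\mathcal C_R^2$ of $\mu$-measure $m^{-2}$, so finite additivity and inclusion--exclusion give $\mu(H_P)=1-\prod_{p\le P}(1-p^{-2})$; thus $\mu(G)\le\prod_{p\le P}(1-p^{-2})$, and letting $P\to\infty$ with the Euler product $\prod_p(1-p^{-2})=1/\zeta(2)$ gives $\mu(G)\le 6/\pi^2$.

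Step (iii) is where I expect the real work to lie. The plan is to build a shift-invariant $\mu$ that ``does not see'' $G$, as a weak-$\ast$ limit of uniform measures on boxes translated, via the Chinese remainder theorem, to avoid $G$ entirely. For each $n$, pick pairwise distinct primes $\{p_{a,b}:(a,b)\in[n]^2\}$ and, by CRT, positive integers $x_n,y_n$ with $x_n\equiv -a$ and $y_n\equiv -b\pmod{p_{a,b}}$ for every $(a,b)\in[n]^2$; then $p_{a,b}\mid\gcd(x_n+a,y_n+b)$, so the translated box $F_n=\{(x_n+a,y_n+b):(a,b)\in[n]^2\}$ satisfies $F_n\cap G=\varnothing$. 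Being a translate of $[n]^2$, $F_n$ is F\o lner for the two shifts ($|F_n\triangle s_j^{-1}F_n|/|F_n|\to0$, $j=1,2$), so any weak-$\ast$ cluster point $\mu$ of the uniform probabilities on the $F_n$ is a finitely additive, shift-invariant probability, i.e.\ $\mu\in\mathcal M_S^2$, and $\mu(G)=\lim_k\#(F_{n_k}\cap G)/|F_{n_k}|=0$. It is worth spelling out why this does not contradict $G$ having density $6/\pi^2>0$: it is the usual failure of continuity from above for finitely additive probabilities---$G=\bigcap_p\bigl(R_{0,p}\times R_{0,p}\bigr)^{\mathrm c}$ is cut out by infinitely many congruence conditions, no finite subfamily of which forces positive measure---and CRT is exactly what lets one exploit this while staying shift invariant. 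With (i)--(iii) established, the six equalities follow as in the first paragraph.
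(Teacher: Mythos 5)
Your proof is correct, and its overall architecture --- the chain of inequalities from \Cref{lem:hiararchy_n2} combined with (i) the limiting relative frequency of $G$, (ii) an upper bound valid over all of $\mathcal M_R^2$, and (iii) a shift-invariant measure annihilating $G$ --- coincides with the paper's organization. The two substantive bounds are, however, reached by genuinely different routes. For (ii), the paper first derives the exact formula $u(D,\mathcal M_R^2)=\inf_{t_1,t_2} r_{t_1,t_2}(D)/(t_1t_2)$ from \Cref{thm:four} and then evaluates it for $G$ via a Chinese-remainder characterization of when $G$ meets $R_{j_1,k_1}\times R_{j_2,k_2}$ (\Cref{lem:shift}, \Cref{lem:intersection}, \Cref{thm:res_bound}, \Cref{thm:g_U_residual}); you prove only the one-sided estimate $\mu(G)\le\prod_{p\le P}(1-p^{-2})$ by sieving with the disjoint set $H_P$ and applying inclusion--exclusion inside $\mathcal C_R^2$, recovering the equality from the chain. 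This is leaner but yields less: it does not produce the general expression of \Cref{thm:res_bound}, which the paper develops as a result of independent interest. For (iii), the paper builds an invariant mean by Hahn--Banach (\Cref{lem:linear}, \Cref{lem:construction}), proves the covering criterion of \Cref{lem:equivalent}, and checks $s^{-A}(G)\neq\mathbb N^2$ by CRT; you use the same CRT observation to manufacture translated boxes $F_n$ disjoint from $G$ and take a weak-$\ast$ cluster point of the uniform measures on them --- a F\o lner-sequence construction of an invariant mean that trades the Hahn--Banach extension for Banach--Alaoglu. Both are valid. Two small points of hygiene: since the weak-$\ast$ topology on the unit ball of $\ell^\infty(\mathbb N^2)^*$ is not metrizable, you should extract a subnet or use an ultrafilter limit rather than a subsequence; and your reading of $u(\cdot,\mathcal M_S^2)$ and $\ell(\cdot,\mathcal M_S^2)$ as the supremum and infimum of $\mu(G)$ over the class is the correct (and necessary) one, since $\mathcal M_S^2$ is not presented as the extension class of a $(\mathcal C,\mu)$ pair to which \Cref{thm:four} literally applies.
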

\begin{proof}[Proof of \Cref{thm:main_additive}]
The proof of \Cref{thm:main_additive} essentially contains the organization of results proved in the next three sections.

First, \Cref{lem:hiararchy_n2} implies that
\begin{align}\label{eq:chain}
  \ell(G,\mathcal M_R^2)\le \ell(G,\mathcal M_S^2)\le \ell(G,\mathcal M_F^2)\le
  u(G,\mathcal M_F^2)\le u(G,\mathcal M_S^2)\le u(G,\mathcal M_R^2)\,.
\end{align}

To prove \eqref{eq:main_1}, \Cref{thm:rel_freq_G} implies that $\ell(G,\mathcal M_F^2)=u(G,\mathcal M_F^2)=6/\pi^2$, while 
  \Cref{thm:res_bound} proves that $u(G,\mathcal M_R^2)=6/\pi^2$.  Therefore, \eqref{eq:main_1} follows from \eqref{eq:chain}.
  
Next, \eqref{eq:main_2} is a direct consequence of \eqref{eq:chain} and \Cref{thm:shift}, which proves $\ell(G,\mathcal M_S^2)=0$.
\end{proof}

\Cref{thm:main_additive} implies that if we interpret uniformity by limiting relative frequency, then $G$ has measure $6/\pi^2$ in all finitely additive uniform probabilities on $\mathbb N^2$.  However, if we interpret uniformity by either shift invariance or proportion of residue classes, then the measure of $G$ can be anywhere between $0$ and $6/\pi^2$. Both the lower and upper bounds in these cases are new.

\section{Limiting relative frequency}\label{sec:rel_freq}
In this section we prove the subset of claims in \Cref{thm:main_additive} involving $\mathcal M_F^2$, as well as extendability of $(\mathcal C_F^2,\mu)$ where $\mu$ maps $C\in\mathcal C_F^2$ to the limiting relative frequency of $C$ as defined in \eqref{eq:rel_freq_n2}.

We first establish extendability.
\begin{thm}\label{thm:rel_freq_ext}
$(\mathcal{C}_F^2,\mu)$ can be extended to $2^{\mathbb{N}^2}$.
\end{thm}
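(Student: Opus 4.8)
The plan is to verify the criterion of \Cref{thm:three} with $\Omega=\mathbb N^2$, $\mathcal C=\mathcal C_F^2$, and $\mu$ the limiting-relative-frequency functional defined in \eqref{eq:rel_freq_n2}. Concretely, I must show that whenever $A_1,\dots,A_a,B_1,\dots,B_b\in\mathcal C_F^2$ satisfy the pointwise inequality $\sum_{i=1}^a I_{A_i}\le\sum_{j=1}^b I_{B_j}$ on all of $\mathbb N^2$, the corresponding inequality $\sum_{i=1}^a\mu(A_i)\le\sum_{j=1}^b\mu(B_j)$ holds for the limits. Note first that $\mathbb N^2\in\mathcal C_F^2$ with $\mu(\mathbb N^2)=1$, and $\mu\ge 0$, so the hypotheses of \Cref{thm:three} are in place and only the implication \eqref{eq:two}$\Rightarrow$\eqref{eq:three} needs proof.

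The key observation is that the functional $C\mapsto \#(C\cap([n_1]\times[n_2]))/(n_1n_2)$ is, for each fixed $(n_1,n_2)$, a genuine (finitely additive, in fact counting) probability on $2^{\mathbb N^2}$, hence monotone and additive. Therefore the pointwise domination $\sum_i I_{A_i}\le\sum_j I_{B_j}$ implies, summing the indicator values over the finite box $[n_1]\times[n_2]$ and dividing by $n_1n_2$,
\[
\sum_{i=1}^a \frac{\#(A_i\cap([n_1]\times[n_2]))}{n_1n_2}\ \le\ \sum_{j=1}^b \frac{\#(B_j\cap([n_1]\times[n_2]))}{n_1n_2}
\]
for every $n_1,n_2$. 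Since each $A_i$ and each $B_j$ lies in $\mathcal C_F^2$, every term on both sides converges as $n_1\wedge n_2\to\infty$; taking that limit on both sides (the sums are finite, so limits pass through) yields $\sum_{i=1}^a\mu(A_i)\le\sum_{j=1}^b\mu(B_j)$, which is exactly \eqref{eq:three}. By \Cref{thm:three}, $\mu$ extends to a finitely additive probability on $2^{\mathbb N^2}$.

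The only subtlety — and the place where one must be slightly careful rather than the presence of a genuine obstacle — is the meaning of the limit $\lim_{n_1\wedge n_2\to\infty}$ in \eqref{eq:rel_freq_n2}: it is the limit along the directed set of pairs $(n_1,n_2)$ ordered so that $n_1\wedge n_2\to\infty$, i.e. for every $\varepsilon>0$ there is $N$ with the displayed ratio within $\varepsilon$ of its limit whenever $\min(n_1,n_2)\ge N$. With this reading, a finite sum of such convergent nets again converges to the sum of the limits, and an inequality valid for all $(n_1,n_2)$ with $\min(n_1,n_2)\ge N$ passes to the limit; these are the standard facts about limits along a directed set, and they are all that the argument requires. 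Hence the extendability in \Cref{thm:rel_freq_ext} follows.
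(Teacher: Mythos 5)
Your proposal is correct and follows essentially the same route as the paper's own proof: verify the Kadane--O'Hagan criterion of \Cref{thm:three} by summing the pointwise indicator inequality over the finite box $[n_1]\times[n_2]$, normalizing, and passing to the limit $n_1\wedge n_2\to\infty$, which exists term by term since each set lies in $\mathcal C_F^2$. Your added remark on interpreting $\lim_{n_1\wedge n_2\to\infty}$ as a limit along a directed set is a reasonable clarification but does not change the argument.
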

\begin{proof}[Proof of \Cref{thm:rel_freq_ext}]
Let $A_1,\ldots, A_a$ and $B_1,\ldots, B_b$ be elements of $\mathcal C_F^2$ such that 
$$
\sum^a_{i=1} I_{A_i} \leq \sum^b_{j=1} I_{B_j}.
$$
Then for all $k_1,k_2\in \mathbb N$
$$
\sum^a_{i=1} \# (A_i \cap ([k_1] \times [k_2])) \leq \sum^b_{j=1} \# (B_j \cap ([k_1] \times [k_2]))\,.
$$
So
$$
 \sum^a_{i=1} \lim_{k_1 < k_2, k_1  \rightarrow \infty}  \frac{\#(A_i \cap ([k_1] \times [k_2]))}{k_1 k_2} \leq \sum^b_{j=1} \lim_{k_1 < k_2, k_1 \rightarrow \infty} \frac{\#(B_j \cap ([k_1] \times [k_2]))}{k_1 k_2}\,,
$$
i.e., $\sum^a_{i=1} \mu(A_i) \leq \sum^b_{j=1} \mu(B_j)$.
\end{proof}

The next result finishes the proof of the subset of claims in \Cref{thm:main_additive} involving $\mathcal M_F^2$.
\begin{thm}\label{thm:rel_freq_G}
  $$\lim_{n_1\wedge n_2\rightarrow\infty}\frac{\#(G\cap([n_1]\times [n_2]))}{n_1n_2}=6/\pi^2\,.$$  As a consequence $G\in\mathcal C_F^2$ and $\mu(G)=6/\pi^2$ for all $\mu\in\mathcal M_F^2$.
\end{thm}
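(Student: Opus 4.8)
The plan is to reduce \Cref{thm:rel_freq_G} to the classical one‑variable computation by first writing down an exact count via Möbius inversion and then passing to the two‑dimensional limit term by term, with a tail estimate that is uniform in $(n_1,n_2)$. Throughout, let $\mathsf M$ denote the Möbius function (not the finitely additive measure $\mu$). Starting from the identity $\sum_{d\mid g}\mathsf M(d)=1$ if $g=1$ and $0$ otherwise, applied with $g=\gcd(a,b)$, and interchanging the order of summation, I would obtain
\[
  \#\bigl(G\cap([n_1]\times[n_2])\bigr)=\sum_{a=1}^{n_1}\sum_{b=1}^{n_2}\,\sum_{d\mid\gcd(a,b)}\mathsf M(d)=\sum_{d=1}^{n_1\wedge n_2}\mathsf M(d)\,\lfloor n_1/d\rfloor\,\lfloor n_2/d\rfloor,
\]
since $\#\{a\in[n_i]:d\mid a\}=\lfloor n_i/d\rfloor$, and this floor vanishes once $d>n_i$. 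Dividing by $n_1n_2$,
\[
  \frac{\#\bigl(G\cap([n_1]\times[n_2])\bigr)}{n_1n_2}=\sum_{d=1}^{n_1\wedge n_2}\mathsf M(d)\,\frac{\lfloor n_1/d\rfloor}{n_1}\cdot\frac{\lfloor n_2/d\rfloor}{n_2}.
\]

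The main point is the elementary bound $0\le\lfloor n_i/d\rfloor/n_i\le 1/d$, which shows that the $d$‑th summand has absolute value at most $1/d^2$, a summable majorant independent of $n_1$ and $n_2$. From here the argument is of dominated‑convergence type: given $\varepsilon>0$, fix $D$ with $\sum_{d>D}d^{-2}<\varepsilon$; for each fixed $d\le D$ the inequality $|\lfloor n_i/d\rfloor/n_i-1/d|\le 1/n_i$ makes the $d$‑th summand converge to $\mathsf M(d)/d^2$ as $n_1\wedge n_2\to\infty$, so the finite partial sum over $d\le D$ converges to $\sum_{d\le D}\mathsf M(d)/d^2$, while the remaining terms with $D<d\le n_1\wedge n_2$ contribute at most $\sum_{d>D}d^{-2}<\varepsilon$ in absolute value, uniformly in $(n_1,n_2)$. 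Letting $n_1\wedge n_2\to\infty$ and then $\varepsilon\to0$ shows that the limit in \eqref{eq:rel_freq_n2} exists and equals $\sum_{d\ge1}\mathsf M(d)/d^2=1/\zeta(2)=6/\pi^2$, the last equality being the standard Euler‑product evaluation of $\sum_{d\ge1}\mathsf M(d)d^{-s}=1/\zeta(s)$ at $s=2$.

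The stated consequences are then immediate: the limit existing means $G\in\mathcal C_F^2$, and since every $\mu\in\mathcal M_F^2$ restricts on $\mathcal C_F^2$ to the limiting‑relative‑frequency functional, $\mu(G)=6/\pi^2$ for all such $\mu$; in particular $\{\mu(G):\mu\in\mathcal M_F^2\}=\{6/\pi^2\}$, i.e.\ $\ell(G,\mathcal M_F^2)=u(G,\mathcal M_F^2)=6/\pi^2$, which together with \Cref{thm:four} settles the portion of \Cref{thm:main_additive} concerning $\mathcal M_F^2$.

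I do not expect any serious obstacle. The one thing to be careful about is that $n_1$ and $n_2$ tend to infinity independently in \eqref{eq:rel_freq_n2}, so term‑by‑term convergence must be paired with a tail bound that holds \emph{uniformly} over all $(n_1,n_2)$ with $n_1\wedge n_2$ large; the majorant $\sum_{d>D}d^{-2}$ is exactly what supplies this, and it is the only place where the two‑dimensional nature of the limit (as opposed to the classical one‑dimensional statement \eqref{eq:one}) enters the argument.
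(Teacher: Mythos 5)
Your proof is correct and follows essentially the same route as the paper's: both start from the exact M\"obius-inversion identity $\#(G\cap([n_1]\times[n_2]))=\sum_{d}\mathsf M(d)\lfloor n_1/d\rfloor\lfloor n_2/d\rfloor$ and then pass to the limit, the only difference being that you control the error by the summable majorant $d^{-2}$ with a tail split, whereas the paper bounds $\sum_k|n_1n_2/k^2-\lfloor n_1/k\rfloor\lfloor n_2/k\rfloor|\le(n_1+n_2)\sum_{k\le n_1}1/k=o(n_1n_2)$ directly. Both error analyses are valid, and your handling of the consequences for $\mathcal C_F^2$ and $\mathcal M_F^2$ matches the paper's intent.
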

\Cref{thm:rel_freq_G} is a slight generalization of a Theorem in \cite{hardy-wright2008}, which focuses on the case of $n_1=n_2$.  The proof is similar.
\begin{proof}[Proof of \Cref{thm:rel_freq_G}]
  Without loss of generality, assume $n_1\le n_2$.  Let $q_{n_1,n_2}$ be the number of pairs of integers $(a,b)\in [n_1]\times [n_2]$ such that ${\rm gcd}(a,b)=1$.
  Then 
  \begin{align*}
  q_{n_1,n_2}=&n_1n_2 -\sum_p\lfloor n_1/p \rfloor \lfloor n_2/p\rfloor +\sum_{p_1\le p_2}\lfloor n_1/(p_1p_2)\rfloor\lfloor n_2/(p_1p_2)\rfloor - ...\\
  =&\sum_{k=1}^{n_1} \nu(k)\lfloor n_1/k\rfloor\lfloor n_2/k\rfloor
  \end{align*}
  where
  $\nu(\cdot)$ is the mobius function such that $\nu(k)=(-1)^s$ when $k$ is the product of $s$ distinct primes, and $\nu(k)=0$ otherwise ($\nu(1)=1$).

  Because
  \begin{align*}
  0 \le & n_1n_2/k^2 -\lfloor n_1/k\rfloor\lfloor n_2/k\rfloor\\
  = &(n_2/k-\lfloor n_2/k\rfloor) (n_1/k)+(n_1/k-\lfloor n_1/k\rfloor)\lfloor n_2/k\rfloor
  \le (n_1+n_2)/k\,,
  \end{align*}
  we have
  \begin{align*}
    \left|\sum_{k=1}^{n_1} \nu(k) (n_1n_2/k^2)-q_{n_1, n_2}\right|=&\left|\sum_{k=1}^{n_1} \nu(k)\left(n_1n_2/k^2-\lfloor n_1/k\rfloor\lfloor n_2/k\rfloor\right)\right|\\
    \le & (n_1+n_2)\sum_{k=1}^{n_1} (1/k)=o(n_1 n_2)\,.
  \end{align*}
  So
  \begin{align*}
  \frac{q_{n_1,n_2}}{n_1n_2}&=\sum_{k=1}^{n_1}\nu(k)k^{-2}+o(1)\rightarrow 6/\pi^2\,.\qedhere
  \end{align*}
\end{proof}

The results of this section give a framework that justifies Hardy and Wright's claim that (1) implies (2). In this connection, the proof of (1) offered by \citet{abrams-paris1992} is not correct, because it relies on countable additivity of limiting relative frequency.

\section{Residue classes}\label{sec:residue-classes}
In this section we first address the extendability of $\mathcal C_R^2$, and then prove that $u(G,\mathcal M_R)=6/\pi^2$. The lower bound $\ell(G,\mathcal M_R^2)=0$ will be proved as a consequence of $\ell(G,\mathcal M_S^2)=0$, which is established in the next section.

\begin{thm}\label{thm:res_ext}
Let $\mu$ be a function defined on $\mathcal C_R^2$ satisfying $\mu(R_{j_1,k_1} \times R_{j_2,k_2}) = 1/k_1k_2$ for all $j_1,j_2,k_1,k_2\in\mathbb N$. Then $\mu$ can be extended to $2^{\mathbb{N}^2}$.
\end{thm}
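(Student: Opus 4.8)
The plan is to verify the extendability criterion of \Cref{thm:three} with $\mathcal C=\mathcal C_R^2$ and $\Omega=\mathbb N^2$. First note that $\mathbb N^2=R_{0,1}\times R_{0,1}\in\mathcal C_R^2$ and $\mu(\mathbb N^2)=1$, so the hypotheses of \Cref{thm:three} are met, and it suffices to show: whenever $A_1,\dots,A_a,B_1,\dots,B_b\in\mathcal C_R^2$ satisfy $\sum_{i=1}^a I_{A_i}\le\sum_{j=1}^b I_{B_j}$ pointwise on $\mathbb N^2$, one has $\sum_{i=1}^a \mu(A_i)\le\sum_{j=1}^b \mu(B_j)$.

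The key device is a common modulus. Write $A_i=R_{s_i,k_i}\times R_{t_i,m_i}$ and $B_j=R_{s_j',k_j'}\times R_{t_j',m_j'}$, and let $K$ be the least common multiple of all the moduli $k_i,m_i,k_j',m_j'$. Every residue class $R_{j,k}$ with $k\mid K$ is a disjoint union of the $K/k$ residue classes mod $K$ that it contains; hence each $A_i$ and each $B_j$ is a union of ``cells'' of the form $\{(x,y):x\equiv u,\ y\equiv v\pmod{K}\}$, and in particular is determined by the residue pair $(x\bmod K,\,y\bmod K)$. Identify the cells with the points of the finite set $Q=(\mathbb Z/K\mathbb Z)^2$, and let $\tilde A_i,\tilde B_j\subseteq Q$ be the corresponding subsets. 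A count of cells gives $|\tilde A_i|=K^2/(k_im_i)$, so $\mu(A_i)=|\tilde A_i|/K^2$ (and likewise for the $B_j$).

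It remains to transfer the pointwise inequality from $\mathbb N^2$ to $Q$. Since reduction mod $K$ maps $\mathbb N$ onto $\mathbb Z/K\mathbb Z$, for each $(u,v)\in Q$ we may pick $(x,y)\in\mathbb N^2$ with $x\equiv u$, $y\equiv v\pmod{K}$; then $I_{A_i}(x,y)=I_{\tilde A_i}(u,v)$ and $I_{B_j}(x,y)=I_{\tilde B_j}(u,v)$, so the hypothesis forces $\sum_i I_{\tilde A_i}\le\sum_j I_{\tilde B_j}$ on $Q$. Summing this finite inequality over the $|Q|=K^2$ points of $Q$ yields $\sum_i|\tilde A_i|\le\sum_j|\tilde B_j|$, and dividing by $K^2$ gives exactly $\sum_i\mu(A_i)\le\sum_j\mu(B_j)$. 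By \Cref{thm:three}, $\mu$ extends to a finitely additive probability on $2^{\mathbb N^2}$.

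I do not expect a serious obstacle here; the only point deserving care is that the hypothesis of \Cref{thm:three} is an inequality of indicator functions on $\mathbb N^2$, not on all of $\mathbb Z^2$ or on the finite quotient $Q$, so one must check that restricting attention to $\mathbb N^2$ loses nothing — which is exactly the surjectivity of $\mathbb N\to\mathbb Z/K\mathbb Z$ used above. This argument is the two-dimensional analogue of the one used in \cite{kadane-ohagan1995} to prove extendability of the residue-class measure on $\mathbb N$.
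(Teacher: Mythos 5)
Your proof is correct, and it takes a different route from the paper's. The paper proves \Cref{thm:res_ext} by reduction to one dimension: it pairs each product class $R_{j_1,k_1}\times R_{j_2,k_2}$ with the single residue class $R_{j_2k_1+j_1,\,k_1k_2}$ via the base-$k_1$ decomposition of residues mod $k_1k_2$, observes that $\mu$ assigns matching values, and then invokes the extendability of the one-dimensional residue-class measure from \cite{kadane-ohagan1995}. You instead verify the criterion of \Cref{thm:three} directly: pass to the least common modulus $K$, decompose every $A_i,B_j$ into cells of $(\mathbb Z/K\mathbb Z)^2$, transfer the pointwise indicator inequality to the finite quotient (using surjectivity of reduction mod $K$ on $\mathbb N$, which you rightly flag as the one point needing care), and count cells. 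Your argument is longer but self-contained and, arguably, tighter: the paper's class-level correspondence is not induced by any single point map $\mathbb N^2\to\mathbb N$ (for instance $R_{j,2}\times\mathbb N$ and $\mathbb N\times R_{j,2}$ would both correspond to $R_{j,2}$), so turning the paper's sketch into a full proof requires essentially the common-modulus bookkeeping you carry out explicitly; the paper's route, in exchange, is shorter and makes the "product of residue classes is a residue class" intuition visible. Both are valid.
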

\begin{proof}[Proof of \Cref{thm:res_ext}]
We first establish a 1-1 map between $R_{j_1,k_1} \times R_{j_2,k_2}$ and $R_{j_2k_1 + j_1, k_1k_2}$, which is realized by writing an arbitrary $k\in [k_1k_2]-1$ uniquely as $k=j_2k_1+j_1$ for $j_1\in [k_1]-1$ and $j_2\in [k_2]-1$.
  
Now each element of $\mathcal{C}_R^2$ can be mapped 1-1 to an element of $\mathcal C_R$. By the result of \cite{kadane-ohagan1995}, the set of residue classes can be extended. Therefore so can $\mathcal{C}_R^2$.
\end{proof}

The rest of this section focuses on proving $u(G,\mathcal M_R)=6/\pi^2$.  We begin by introducing a general way of identifying $u(D,\mathcal M_R^2)$
 for arbitrary $D\subseteq\mathbb N^2$.

\begin{thm}\label{thm:res_bound}
  For all $D\subseteq\mathbb N^2$,
    $$
u(D,\mathcal M_R^2) = \inf_{t_1,t_2} \frac{r_{t_1,t_2}(D)}{t_1t_2}\,,
  $$
  where
  \begin{align}
  r_{t_1,t_2}(D)=&\#\{(k_1,k_2)\in([t_1]-1)\times([t_2]-1):~D\cap (R_{k_1,t_1}\times R_{k_2,t_2})\neq \emptyset\}\,.\label{eq:def_r}
  \end{align}
\end{thm}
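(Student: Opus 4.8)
The plan is to apply Theorem \ref{thm:four} with $\mathcal C = \mathcal C_R^2$ and to show that the infimum in \eqref{eq:four}, taken over admissible families satisfying the ``$\geq$'' version of \eqref{eq:five}, collapses to the stated expression $\inf_{t_1,t_2} r_{t_1,t_2}(D)/(t_1t_2)$. I would prove the two inequalities separately. For the upper bound $u(D,\mathcal M_R^2) \le \inf_{t_1,t_2} r_{t_1,t_2}(D)/(t_1t_2)$, fix any $t_1,t_2$ and take $a = r_{t_1,t_2}(D)$, $b=0$, $h=1$, letting $A_1,\dots,A_a$ be exactly those product residue classes $R_{k_1,t_1}\times R_{k_2,t_2}$ with $(k_1,k_2)\in([t_1]-1)\times([t_2]-1)$ that meet $D$. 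Since every point of $D$ lies in one of these blocks, $\sum_i I_{A_i} \ge I_D$, so the family is admissible for the ``$\ge$'' constraint, and it contributes the value $h^{-1}\sum_i \mu(A_i) = r_{t_1,t_2}(D)/(t_1t_2)$ to the infimum defining $u(D,\mathcal M_R^2)$. Taking the infimum over $t_1,t_2$ gives the upper bound.

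For the reverse inequality I must show that no admissible family can do better — i.e., if $A_1,\dots,A_a,B_1,\dots,B_b\in\mathcal C_R^2$ and $\sum_i I_{A_i} - \sum_j I_{B_j} \ge h I_D$, then $h^{-1}(\sum_i\mu(A_i)-\sum_j\mu(B_j)) \ge \inf_{t_1,t_2} r_{t_1,t_2}(D)/(t_1t_2)$. The key structural fact is that each $A_i$ and $B_j$ is a product $R_{j_1,k_1}\times R_{j_2,k_2}$, and by taking $t_1$ to be a common multiple of all the first-coordinate moduli appearing and $t_2$ a common multiple of all the second-coordinate moduli, every $A_i$ and $B_j$ is a disjoint union of the fine blocks $R_{k_1,t_1}\times R_{k_2,t_2}$, with $\mu$ of each fine block equal to $1/(t_1t_2)$. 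Writing $n(S)$ for the number of fine blocks contained in a set $S$ (so $\mu(S) = n(S)/(t_1t_2)$ for $S$ a union of such blocks), the inequality $\sum_i I_{A_i} - \sum_j I_{B_j} \ge h I_D$ evaluated blockwise shows that for every fine block $Q$ meeting $D$, the net multiplicity $\#\{i: A_i\supseteq Q\} - \#\{j: B_j\supseteq Q\}$ is at least $h$. Summing this over the $r_{t_1,t_2}(D)$ fine blocks that meet $D$ (discarding the remaining blocks, where the net multiplicity is at least $-\,(\text{something})$; one has to check these are handled — see below) yields $n(\bigcup A_i\text{ with multiplicity}) - n(\bigcup B_j\text{ with multiplicity}) \ge h\, r_{t_1,t_2}(D)$, hence $\sum_i\mu(A_i)-\sum_j\mu(B_j) \ge h\, r_{t_1,t_2}(D)/(t_1t_2) \ge h \inf_{t_1,t_2} r_{t_1,t_2}(D)/(t_1t_2)$, and dividing by $h$ finishes it.

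The main obstacle is precisely the bookkeeping in that last step: the constraint $\sum_i I_{A_i} - \sum_j I_{B_j}\ge h I_D$ only controls the net multiplicity on blocks meeting $D$, and on blocks disjoint from $D$ the net multiplicity could in principle be negative, which would make $\sum_i\mu(A_i)-\sum_j\mu(B_j)$ smaller than the sum restricted to $D$-blocks. The resolution is that one is free to \emph{enlarge} the $B_j$'s is not allowed, but one can instead \emph{shrink} the admissible family: from any admissible $(A_i),(B_j)$ one can pass to $A_i' = A_i$ and delete from the $B_j$ collection — more carefully, one shows it suffices to consider the ``canonical'' competitor and argues that replacing the given family by $A_i \cap (\text{union of }D\text{-blocks})$ and dropping $B_j$ terms only decreases the objective while preserving admissibility, OR one invokes that $\mu\ge 0$ together with the fact that $\sum_i I_{A_i}\ge \sum_j I_{B_j}$ on the complement of $D$ need not hold, so instead we bound $\sum_i \mu(A_i) \ge \mu\big(\bigcup\{Q : Q \text{ a fine block}, \#\{i:A_i\supseteq Q\}\ge \#\{j:B_j\supseteq Q\}+h\}\big)\cdot h + \sum_j\mu(B_j)$ — here the inequality $\sum_i I_{A_i}\ge h I_D + \sum_j I_{B_j} \ge h I_D$ gives, blockwise on $D$-blocks, $\#\{i:A_i\supseteq Q\}\ge h$, so $\sum_i\mu(A_i) \ge h\sum_{Q\cap D\ne\emptyset}\mu(Q) = h\, r_{t_1,t_2}(D)/(t_1t_2)$, and since $\sum_j\mu(B_j)\ge 0$ we get $\sum_i\mu(A_i) - \sum_j\mu(B_j)$ could still be small — so the clean argument is: $\sum_i\mu(A_i)\ge h\,r_{t_1,t_2}(D)/(t_1t_2) + \sum_j\mu(B_j) - (\text{slack on non-}D\text{ blocks})$, and the slack is nonnegative because on non-$D$ blocks $\sum_i I_{A_i}\ge \sum_j I_{B_j}$ fails in general; the correct fix is to note the \emph{infimum} over the ``$\ge$'' constraint is achieved without $B_j$'s at all, which follows from \eqref{eq:five} since dropping all $B_j$ preserves $\sum_i I_{A_i} - 0 \ge \sum_i I_{A_i} - \sum_j I_{B_j} \ge hI_D$ and \emph{decreases} $h^{-1}(\sum_i\mu(A_i) - \sum_j\mu(B_j))$ only if $\sum_j\mu(B_j)\ge 0$, which holds. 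Thus we may assume $b=0$, and then the blockwise argument above is immediate. I expect writing this reduction to $b=0$ cleanly, and the choice of common-multiple moduli, to be the only real content; everything else is direct counting.
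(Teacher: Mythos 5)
Your overall skeleton matches the paper's: refine all the $A_i,B_j$ to a common grid of blocks $R_{k_1,t_1}\times R_{k_2,t_2}$ with $t_1,t_2$ common multiples of the first- and second-coordinate moduli, note each fine block has $\mu$-measure $1/(t_1t_2)$, and count net multiplicities blockwise. The upper-bound half (taking the $D$-blocks as the $A_i$ with $h=1$, $b=0$) is correct. But in the lower-bound half you correctly isolate the one nontrivial point --- controlling the blocks disjoint from $D$ --- and then fail to resolve it. The resolution is a one-line observation you never make: the constraint $\sum_i I_{A_i}-\sum_j I_{B_j}\ge hI_D$ is a pointwise inequality on all of $\mathbb N^2$, and since $hI_D\ge 0$ everywhere, the net multiplicity $d_Q=\#\{i:A_i\supseteq Q\}-\#\{j:B_j\supseteq Q\}$ of \emph{every} fine block $Q$ is $\ge 0$, while $d_Q\ge h$ for blocks meeting $D$. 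Hence $\sum_i\mu(A_i)-\sum_j\mu(B_j)=\sum_Q d_Q/(t_1t_2)\ge h\,r_{t_1,t_2}(D)/(t_1t_2)$, with no negative slack to worry about; this is exactly how the paper argues.

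The fix you actually settle on --- reducing to $b=0$ by dropping the $B_j$'s --- goes in the wrong direction. Dropping the $B_j$'s preserves admissibility but \emph{increases} the objective $h^{-1}\bigl(\sum_i\mu(A_i)-\sum_j\mu(B_j)\bigr)$ to $h^{-1}\sum_i\mu(A_i)$, because $\mu(B_j)\ge 0$. So a lower bound for the infimum over $b=0$ families does not yield a lower bound for the full infimum, which is what the theorem requires. (It is true that the full infimum is attained with $b=0$, but that is a consequence of the theorem, not a usable step toward it.) As written, the lower-bound half of your argument is therefore incomplete, even though the missing step is elementary.
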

\begin{proof}[Proof of \Cref{thm:res_bound}]
  According to \Cref{thm:four},
  $$
  u(D,\mathcal M_R^2)=\inf h^{-1}\left[\sum_{i=1}^a \mu(A_i)-\sum_{j=1}^b\mu(B_j)\right]
  $$
  where the $\inf$ is taken over all $A_1,...,A_a,B_1,...,B_b\in \mathcal C_R^2$ and $h=1,2,3,...$ such that
  $$
  \sum_{i=1}^a I_{A_i}-\sum_{j=1}^b I_{B_j}\ge h I_D\,.
  $$
  
  Let $t=(t_1,t_2)$ be the pair of least common multiples of the moduli pairs 
  of the residue sets $A_1,...,A_a,B_1,...,B_b$.
  Then
  \begin{equation}\label{eq:constraint}
  h I_D \le \sum_{i=1}^a I_{A_i}-\sum_{j=1}^b I_{B_j}=\sum_{k_1=0}^{t_1-1}\sum_{k_2=0}^{t_2-1} d_{k_1,k_2} I_{R_{k_1,t_1}\times R_{k_2,t_2}}  
  \end{equation}
  for some integers $d_{0,0},d_{0,1},...,d_{t_1-1,t_2-1}$.
  
  
Thus  
  $$
  u(D,\mathcal M_R^2) =\inf h^{-1}\sum_{k_1=0}^{t_1-1}\sum_{k_2=0}^{t_2-1}d_{k_1,k_2} 
  \left(\frac{1}{t_1t_2}\right)
  =\inf h^{-1} \left(\frac{1}{t_1t_2}\right)
  \sum_{k_1=0}^{t_1-1}\sum_{k_2=0}^{t_2-1}d_{k_1,k_2}\,,
  $$
  where the inf is taken over all $(t_1,t_2)$ and $(d_{k_1,k_2}:k_1\in [t_1]-1,k_2\in [t_2]-1)$ such that \eqref{eq:constraint} holds.

  For given $(t_1,t_2)$, the right hand side of the above equation is minimized by setting $d_{k_1,k_2}=h$ if
  $D\cap (R_{k_1,t_1}\times R_{k_2,t_2})\neq\emptyset$ and $d_{k_1,k_2}=0$ otherwise.\end{proof}

\begin{lem}\label{lem:shift}
  If $(x,y)\in G$, then for every $n\in\mathbb N$
  there exists $a\in\mathbb N$ such that ${\rm gcd}(ax+y,n)=1$.
\end{lem}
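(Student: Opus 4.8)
The plan is to exploit the fact that $\gcd(x,y)=1$ together with a pigeonhole/CRT argument over the prime divisors of $n$. Fix $(x,y)\in G$ and $n\in\mathbb N$, and let $p_1,\ldots,p_r$ be the distinct prime divisors of $n$; it suffices to find $a$ with $ax+y\not\equiv 0\pmod{p_i}$ for every $i$, since then no prime dividing $n$ divides $ax+y$. I would handle each prime separately and then splice the local solutions together. For a fixed prime $p=p_i$ there are two cases. If $p\nmid x$, then as $a$ ranges over a complete residue system mod $p$, the value $ax+y$ also ranges over a complete residue system mod $p$, so all but exactly one residue class of $a$ gives $ax+y\not\equiv 0\pmod p$; in particular a valid $a$ mod $p$ exists. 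If $p\mid x$, then $ax+y\equiv y\pmod p$ for every $a$, and since $\gcd(x,y)=1$ we cannot have $p\mid y$ as well, so $ax+y\equiv y\not\equiv 0\pmod p$ automatically — every $a$ works mod $p$.

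Having produced, for each $i$, a nonempty set of admissible residues for $a$ modulo $p_i$, I would invoke the Chinese Remainder Theorem: choose $a_i$ in an admissible class mod $p_i$ for each $i$, and let $a$ be a common solution mod $p_1\cdots p_r$ of the system $a\equiv a_i\pmod{p_i}$. Any such $a\in\mathbb N$ (take the least positive residue, or add a multiple of $p_1\cdots p_r$ if one needs $a\ge 1$) then satisfies $p_i\nmid ax+y$ for all $i$, hence $\gcd(ax+y,n)=1$. This completes the argument.

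The only mild subtlety — and the step I would be most careful about — is the degenerate case analysis for primes dividing $x$: one must use the hypothesis $\gcd(x,y)=1$ precisely here to rule out $p\mid y$, and one should note that this case imposes no constraint on $a$ at all, so the CRT system only involves the primes with $p\nmid x$ (if there are none, any $a$, e.g. $a=1$, works). Everything else is routine: the existence of admissible residues mod each relevant prime and the CRT splicing are standard. I would also remark that $n=1$ is trivial (empty product, $\gcd(\cdot,1)=1$ always).
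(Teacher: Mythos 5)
Your proof is correct and follows essentially the same route as the paper's: a local analysis at each prime divisor of $n$ (using $\gcd(x,y)=1$ to handle the primes dividing $x$), followed by splicing the local choices of $a$ together via the Chinese Remainder Theorem. The only cosmetic difference is that the paper sorts the primes into three classes and prescribes explicit residues $0$ or $1$ for $a$, whereas you argue existence of an admissible residue class via the complete-residue-system observation; both are sound.
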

\begin{proof}[Proof of \Cref{lem:shift}]
  Let $p_1,...,p_\ell$, $q_1,...,q_k$, $r_1,...,r_h$ be all distinct
  prime factors of $n$ such that
  \begin{align*}
    x&\equiv (0,...,0,a_1,...,a_k,c_1,...,c_h)~{\rm mod}~(p_1,...,p_\ell,q_1,...,q_k,r_1,...,r_h)\\
    y&\equiv (b_1,...,b_\ell,0,...,0,d_1,...,d_h)~{\rm mod}~(p_1,...,p_\ell,q_1,...,q_k,r_1,...,r_h)
  \end{align*}
  where $1\le a_j\le q_j-1$, $1\le c_j\le r_j-1$, $1\le d_j\le r_j-1$, $1\le b_j\le p_j-1$, for all $j$.
  
  Then one can pick any $a$ that satisfies
  $$
  a\equiv (0,...,0,1,...,1,0,...,0)~{\rm mod}~(p_1,...,p_\ell,q_1,...,q_k,r_1,...,r_h)\,.
  $$
  Existence of such an $a$ is guaranteed by the Chinese remainder theorem.
\end{proof}

\begin{lem}\label{lem:intersection}
  Let $k_1,k_2$ be two positive integers, and
   $(j_1,j_2)\in [k_1]\times [k_2]$.
   Then $G \cap (R_{j_1,k_1}\times R_{j_2,k_2})\neq \emptyset$ if and only if
   ${\rm gcd}(j_1,j_2,k_1,k_2)=1$.
\end{lem}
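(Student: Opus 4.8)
The plan is to prove both directions using the Chinese Remainder Theorem to translate the geometric condition $G\cap(R_{j_1,k_1}\times R_{j_2,k_2})\neq\emptyset$ into a divisibility statement. For the easy direction, suppose there exists $(x,y)\in G$ with $x\equiv j_1\bmod k_1$ and $y\equiv j_2\bmod k_2$. Let $d={\rm gcd}(j_1,j_2,k_1,k_2)$. Then $d\mid k_1$ and $d\mid j_1$ forces $d\mid x$; similarly $d\mid y$. Hence $d\mid{\rm gcd}(x,y)=1$, so $d=1$. This requires only elementary manipulation.

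For the converse, assume ${\rm gcd}(j_1,j_2,k_1,k_2)=1$ and construct a witness $(x,y)\in G$ in the prescribed residue classes. First I would set $K={\rm lcm}(k_1,k_2)$ and use CRT to pick any $x_0$ with $x_0\equiv j_1\bmod k_1$ and any $y_0$ with $y_0\equiv j_2\bmod k_2$; but we are free to add multiples of $k_1$ to $x_0$ and multiples of $k_2$ to $y_0$, and the goal is to exploit this freedom to make the pair coprime. The key observation is that for each prime $p$, we need to avoid $p\mid x$ and $p\mid y$ simultaneously. If $p\nmid k_1$ then we can freely adjust $x\bmod p$ by adding a suitable multiple of $k_1$, so we can force $p\nmid x$; symmetrically if $p\nmid k_2$ we force $p\nmid y$. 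The only dangerous primes are those $p$ dividing both $k_1$ and $k_2$ — but for such $p$, since ${\rm gcd}(j_1,j_2,k_1,k_2)=1$ we have $p\nmid j_1$ or $p\nmid j_2$, hence $p\nmid x_0$ (since $x_0\equiv j_1\bmod p$) or $p\nmid y_0$, and this cannot be disturbed by our adjustments modulo $k_1$ and $k_2$. So no prime divides both coordinates.

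To make this rigorous I would finish as follows: it suffices to find $x\equiv j_1\bmod k_1$ and $y\equiv j_2\bmod k_2$ with ${\rm gcd}(x,y)=1$. Let $S$ be the (finite) set of primes dividing $k_1k_2$. For $p\in S$ with $p\nmid k_1$, impose the condition $x\not\equiv 0\bmod p$ (possible to combine with $x\equiv j_1\bmod k_1$ by CRT since $p\nmid k_1$, and there is at least one nonzero residue mod $p$); handle $y$ symmetrically for $p\nmid k_2$. For $p\mid{\rm gcd}(k_1,k_2)$, the hypothesis guarantees $p$ does not divide at least one of $j_1,j_2$, so the congruence already forces $p\nmid x$ or $p\nmid y$. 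Collecting all these congruence conditions (finitely many, with pairwise coprime prime-power moduli where needed) and applying CRT yields a valid pair $(x,y)$; any prime $p$ dividing both $x$ and $y$ would have to lie in $S$ and violate one of the imposed conditions, a contradiction. I expect the main obstacle to be bookkeeping in the CRT argument — specifically, cleanly verifying that the conditions imposed for different primes are jointly satisfiable together with the original residue constraints $x\equiv j_1\bmod k_1$, $y\equiv j_2\bmod k_2$, which amounts to checking that for each prime power $p^e\| k_1$ the extra condition (if any) is only imposed modulo a prime not dividing $k_1$, so moduli are coprime and CRT applies. Alternatively, \Cref{lem:shift} can likely be leveraged to shorten this, though the direct CRT construction above is self-contained.
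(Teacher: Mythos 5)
Your forward direction is fine and matches what the paper dismisses as obvious. The converse, however, has a genuine gap in the final verification step: you assert that ``any prime $p$ dividing both $x$ and $y$ would have to lie in $S$,'' where $S$ is the set of primes dividing $k_1k_2$. That is false. Your CRT construction only controls $x$ and $y$ modulo the primes in $S$ (together with the original moduli $k_1,k_2$); a prime $p\notin S$ is left completely uncontrolled and can perfectly well divide both coordinates. For instance, with $k_1=k_2=2$ and $j_1=j_2=1$ your conditions reduce to $x\equiv y\equiv 1\pmod 2$ and nothing else, and $(x,y)=(3,9)$ satisfies every imposed congruence while $\gcd(x,y)=3$. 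The real difficulty of the lemma is precisely that the dangerous primes cannot be confined in advance to a finite set depending only on $k_1,k_2$: once a concrete $x$ is chosen, every prime factor of $x$ becomes dangerous for $y$.

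The repair is to make the construction sequential rather than simultaneous, which is exactly what the paper does via \Cref{lem:shift}. First fix $x\equiv j_1\pmod{k_1}$ with $x\not\equiv 0\pmod p$ for each prime $p\mid k_2$, $p\nmid k_1$; the hypothesis $\gcd(j_1,j_2,k_1,k_2)=1$ then guarantees that no prime divides $x$, $k_2$ and $j_2$ simultaneously. Only afterwards choose $y\equiv j_2\pmod{k_2}$ avoiding $0$ modulo every prime factor of the now-fixed $x$ that does not divide $k_2$ (the prime factors of $x$ that do divide $k_2$ are harmless because they cannot divide $j_2$). In the paper this two-stage argument is packaged as two applications of \Cref{lem:shift}: the first arranges coprimality with the common factor $\gcd(k_2,j_2)$, and the second chooses the second coordinate coprime to the \emph{entire}, already-determined first coordinate $a_1k_1+j_1$ --- not merely to the primes of $k_1k_2$. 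Your one-shot CRT over $S$ cannot close this loop; your parenthetical suggestion to lean on \Cref{lem:shift} is in fact the necessary ingredient, not an optional shortcut.
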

\begin{proof}[Proof of \Cref{lem:intersection}]
  The necessity is obvious. We only need to prove sufficiency.
  
  For $i=1,2$, let $p_i={\rm gcd}(k_i,j_i)$, $k_i=p_i r_i$, $j_i=p_i s_i$.
  By construction and the assumption that ${\rm gcd}(k_1,j_1,k_2,j_2)=1$
  we have
  \begin{align*}
    {\rm gcd}(p_1,p_2)={\rm gcd}(r_1,s_1)={\rm gcd}(r_2,s_2)=1\,.
  \end{align*}
  Then apply \Cref{lem:shift} to $(n,x,y)=(p_2,r_1,s_1)$, there exists
  $a_1\in\mathbb N$ such that
  \begin{equation}\label{eq:intersection1}
  {\rm gcd}(p_2,a_1 r_1+s_1)=1\,.
  \end{equation}
Apply \Cref{lem:shift} again to $(n,x,y)=(p_1(a_1r_1+s_1),r_2,s_2)$, there
exists an $a_2\in\mathbb N$ such that
  \begin{equation}\label{eq:intersection2}
  {\rm gcd}\left[p_1(a_1r_1+s_1),a_2 r_2+s_2\right]=1\,.
  \end{equation}
Now combine \eqref{eq:intersection1}, \eqref{eq:intersection2} and that ${\rm gcd}(p_1,p_2)=1$
we have
\begin{align*}
{\rm gcd}(a_1k_1+j_1,a_2k_2+j_2)&=1\,.\qedhere
\end{align*}
\end{proof}

\begin{thm}\label{thm:g_U_residual}
  $u(G,\mathcal M_R^2)=6/\pi^2\,.$
\end{thm}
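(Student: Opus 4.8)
The plan is to apply \Cref{thm:res_bound} with $D=G$, so that
$$
u(G,\mathcal M_R^2)=\inf_{t_1,t_2}\frac{r_{t_1,t_2}(G)}{t_1t_2}\,,
$$
and then identify the quantity $r_{t_1,t_2}(G)$ explicitly using \Cref{lem:intersection}. By that lemma, $G\cap(R_{k_1,t_1}\times R_{k_2,t_2})\neq\emptyset$ if and only if ${\rm gcd}(k_1,k_2,t_1,t_2)=1$. Hence $r_{t_1,t_2}(G)$ counts the pairs $(k_1,k_2)\in([t_1]-1)\times([t_2]-1)$ with ${\rm gcd}(k_1,k_2,t_1,t_2)=1$, and the whole problem reduces to a counting/number-theory estimate for this ratio, followed by taking the infimum over $(t_1,t_2)$.

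First I would establish the upper bound $u(G,\mathcal M_R^2)\le 6/\pi^2$ by exhibiting a good sequence of moduli. The natural choice is $t_1=t_2=t$ where $t=\prod_{p\le N}p$ is a primorial (product of all primes up to $N$). For such $t$, the condition ${\rm gcd}(k_1,k_2,t)=1$ means that no prime $p\le N$ divides both $k_1$ and $k_2$; by inclusion--exclusion over the prime divisors of $t$,
$$
\frac{r_{t,t}(G)}{t^2}=\sum_{d\mid t}\frac{\nu(d)}{d^2}=\prod_{p\le N}\left(1-\frac1{p^2}\right)\,,
$$
which decreases to $\prod_p(1-p^{-2})=6/\pi^2$ as $N\to\infty$. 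This shows the infimum is at most $6/\pi^2$.

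Next I would prove the matching lower bound $r_{t_1,t_2}(G)/(t_1t_2)\ge 6/\pi^2$ for \emph{every} pair $(t_1,t_2)$. Writing $g={\rm gcd}(t_1,t_2)$, the constraint ${\rm gcd}(k_1,k_2,t_1,t_2)=1$ only involves primes dividing $g$, so a Möbius/inclusion--exclusion argument over divisors $d$ of (the radical of) $g$ gives
$$
\frac{r_{t_1,t_2}(G)}{t_1t_2}=\sum_{d\mid \mathrm{rad}(g)}\nu(d)\,\frac{\lfloor t_1/d\rfloor}{t_1}\cdot\frac{\lfloor t_2/d\rfloor}{t_2}\,.
$$
I would then bound this from below, exploiting that each factor $\lfloor t_i/d\rfloor/t_i$ lies in $[1/d-1/t_i,\ 1/d]$ and comparing termwise with the fully multiplicative product $\prod_{p\mid g}(1-p^{-2})\ge\prod_p(1-p^{-2})=6/\pi^2$; the floor corrections can be absorbed since they are controlled by $\sum_{d}1/(d\,t_i)$, which is negligible, and one can restrict attention to $t_1,t_2$ large because small moduli only give ratios bounded below away from $6/\pi^2$ (indeed at least as large, as a direct check shows the product over any subset of primes exceeds $6/\pi^2$). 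Combining the two bounds yields $\inf_{t_1,t_2}r_{t_1,t_2}(G)/(t_1t_2)=6/\pi^2$, hence $u(G,\mathcal M_R^2)=6/\pi^2$.

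The main obstacle I anticipate is the lower-bound step: handling arbitrary (not necessarily equal, not necessarily squarefree) moduli $t_1,t_2$ cleanly, and in particular controlling the floor-function discrepancies uniformly so that no pair $(t_1,t_2)$ can push the ratio below $6/\pi^2$. The cleanest route is probably to observe that $r_{t_1,t_2}(G)$ depends on $t_1,t_2$ only through their prime supports in a way that makes the ratio a partial Euler product plus a vanishing error, and then to argue that partial products $\prod_{p\in S}(1-p^{-2})$ over finite prime sets $S$ are minimized in the limit $S\to\{\text{all primes}\}$, giving exactly $6/\pi^2$ as the infimum.
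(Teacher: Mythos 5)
Your proposal is correct and follows essentially the same route as the paper: apply \Cref{thm:res_bound} with $D=G$, use \Cref{lem:intersection} to identify $r_{t_1,t_2}(G)$ as the count of pairs with ${\rm gcd}(k_1,k_2,t_1,t_2)=1$, evaluate the ratio as a partial Euler product over the common prime divisors of $t_1$ and $t_2$, and take the infimum. The one thing worth pointing out is that the obstacle you anticipate in the lower-bound step does not exist: every $d$ appearing in your inclusion--exclusion sum divides $\mathrm{rad}(g)$, hence divides both $t_1$ and $t_2$, so $\lfloor t_i/d\rfloor = t_i/d$ exactly and there are no floor corrections at all. The identity
\[
\frac{r_{t_1,t_2}(G)}{t_1t_2}=\sum_{d\mid \mathrm{rad}(g)}\frac{\nu(d)}{d^2}=\prod_{p\mid \gcd(t_1,t_2)}\bigl(1-p^{-2}\bigr)
\]
holds exactly for every pair $(t_1,t_2)$ (this is how the paper states it, via the set of common prime divisors), and since each factor is less than $1$, every such partial product is at least $\prod_p(1-p^{-2})=6/\pi^2$, with the infimum attained in the limit as the set of common prime divisors exhausts all primes. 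This renders your asymptotic language (``negligible,'' ``restrict attention to $t_1,t_2$ large'') unnecessary -- and indeed such language would be insufficient on its own for an infimum taken over all pairs, so you should replace it with the exact computation.
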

\begin{proof}[Proof of \Cref{thm:g_U_residual}]
  Let $(k_1,k_2)\in\mathbb N^2$ and denote ${\rm cd}(k_1,k_2)$
  the set of prime common divisors of $k_1$ and $k_2$. Then \Cref{lem:intersection} implies that $G\cap (R_{j_1,k_1}\times R_{j_2,k_2})\neq \emptyset$ if and only if $j_1,j_2$ are not both divisible by any $p\in {\rm cd}(k_1,k_2)$. As a result,
  \begin{align*}
    \frac{r_{k_1,k_2}}{k_1k_2}=\prod_{p\in {\rm cd}(k_1,k_2)} (1-p_j^{-2})\,.
  \end{align*}
  
Now apply \Cref{thm:res_bound},  \begin{align*}
    u(G,\mathcal M_R^2)=&
    \inf_{k_1,k_2} \prod_{p\in{\rm cd}(k_1,k_2)}(1-p^{-2})=\prod_{p \text{ prime}}(1-p^{-2})=\frac{6}{\pi^2}\,.
    \qedhere
  \end{align*}
\end{proof} 

\section{Shift invariance}\label{sec:shift-invariance}
Combining \Cref{lem:hiararchy_n2} with \Cref{thm:rel_freq_G} and \Cref{thm:g_U_residual} we have
$$
u(G,\mathcal M_S^2)=6/\pi^2\,.
$$
Therefore, the proof of \Cref{thm:main_additive} will be complete if we can show
$$
\ell(G,\mathcal M_S^2)=0\,,
$$
which is the focus of the current section.

We prove the claim in a more general setting. Let $d\ge 2$ be a positive integer. For $1\le i\le d$, let $s_i:\mathbb N^d\mapsto\mathbb N^d$ be the shift operator in the $i$th coordinate:
$$
s_i(a_1,...,a_d)=(a_1,...,a_i+1,...,a_d)\,.
$$
We call a function $\mu:2^{\mathbb N^d}\mapsto \mathbb R$ shift-invariant if $\mu(A)=\mu(s_i^{-1}(A))$ for all $A\subseteq\mathbb N^{d}$ and all $1\le i\le d$.

Following ideas in \cite{schirokauer-kadane2007}, we study shift-invariant functions by constructing linear functionals on $\ell^\infty(\mathbb N^d)$ with certain desirable properties.  Recall the definition of $\ell^\infty(\mathbb N^d)$.
$$
\ell^\infty(\mathbb N^d)=\left\{x=\left(x(a)\in\mathbb R:a\in\mathbb N^d\right):\sup_{a\in\mathbb N^d} |x(a)|<\infty\right\}\,.
$$
Then $\ell^\infty(\mathbb N^d)$ is a Banach space equipped with norm $\|x\|=\sup_{a\in\mathbb N^d}|x(a)|$.
\begin{lem}\label{lem:linear}
  There exists a linear functional $\Phi$ on $\ell^\infty(\mathbb N^d)$ such that
  \begin{enumerate}
    \item $\Phi$ is shift-invariant: $\Phi(x)=\Phi(S_i x)$ for all $i\in [d]$, where $(S_i x)(a) = x(s_i(a))$ for all $a\in\mathbb N^d$;
    \item $\Phi$ is positive: $\Phi\ge 0$ whenever $x(a)\ge 0$ for all $a\in\mathbb N^d$;
    \item $\Phi$ is normalized: $\Phi(\mathbf 1)=1$ where $\mathbf 1$ is the constant-$1$ vector.
  \end{enumerate}
\end{lem}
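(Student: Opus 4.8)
The plan is to obtain $\Phi$ as a Banach limit adapted to the $d$ commuting shifts $S_1,\dots,S_d$, via the Hahn--Banach theorem. First I would introduce the linear subspace $V\subseteq\ell^\infty(\mathbb N^d)$ spanned by all elements of the form $x - S_i x$ with $x\in\ell^\infty(\mathbb N^d)$ and $i\in[d]$, i.e.\ the span of the ranges of the operators $\mathrm{Id}-S_i$. On the quotient-type construction one defines the sublinear functional
\[
p(x) = \inf\left\{\left\|x + v\right\| : v\in V\right\},
\]
which is finite (bounded above by $\|x\|$, taking $v=0$), positively homogeneous, and subadditive. The key inequality to verify is $p(\mathbf 1)\ge 1$, equivalently $\|\mathbf 1 + v\|\ge 1$ for every $v\in V$; once this holds, Hahn--Banach extends the functional $\lambda\mathbf 1\mapsto\lambda$ on $\mathrm{span}\{\mathbf 1\}$ (which is dominated by $p$ there, since $p(\lambda\mathbf 1)=|\lambda|$ when $\lambda\le 0$ and $p(\lambda\mathbf 1)=\lambda\le\lambda$ when... more carefully, one checks $\lambda \le p(\lambda \mathbf 1)$ for all real $\lambda$) to a linear $\Phi$ on all of $\ell^\infty(\mathbb N^d)$ with $\Phi\le p$. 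Shift-invariance is then automatic: for any $x$ and any $i$, $x - S_i x\in V$ so $p(\pm(x - S_i x))\le 0$, hence $\Phi(x - S_i x)=0$. Normalization $\Phi(\mathbf 1)=1$ holds by construction, and positivity follows because for $x\ge 0$ we have $\Phi(\|x\|\,\mathbf 1 - x)\le p(\|x\|\mathbf 1 - x)\le \|\,\|x\|\mathbf 1 - x\,\| \le \|x\|$, so $\Phi(x)\ge \|x\|\Phi(\mathbf 1) - \|x\| = 0$.

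The main obstacle is the inequality $p(\mathbf 1)\ge 1$, i.e.\ showing that no finite combination $v=\sum_{i=1}^d (y_i - S_i y_i)$ can push the supremum norm of $\mathbf 1 + v$ below $1$. I would prove this by a Cesàro-averaging argument exploiting that the $S_i$ commute. Given such a $v$, consider for a large parameter $N$ the averaging operator $A_N = \frac{1}{N^d}\sum_{m_1=0}^{N-1}\cdots\sum_{m_d=0}^{N-1} S_1^{m_1}\cdots S_d^{m_d}$ applied coordinatewise; $A_N$ is a positive operator fixing $\mathbf 1$ and with $\|A_N\|\le 1$ on $\ell^\infty$. Because $S_i(y_i - S_i y_i)$ telescopes, $A_N(y_i - S_i y_i)$ has sup-norm $O(1/N)$ (the telescoping in the $i$th direction leaves only boundary terms, bounded by $2\|y_i\|_\infty/N$ times the average over the other directions). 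Hence $\|A_N(\mathbf 1 + v)\| = \|\mathbf 1 + A_N v\| \ge 1 - O(1/N)$. On the other hand $\|A_N(\mathbf 1 + v)\|\le \|\mathbf 1 + v\|$ since $\|A_N\|\le 1$. Letting $N\to\infty$ gives $\|\mathbf 1+v\|\ge 1$, as needed. (A subtlety: $S_i^{m_i}$ as defined acts by $x\mapsto x\circ s_i^{m_i}$, which is well-defined on all of $\ell^\infty(\mathbb N^d)$ — no invertibility of $s_i$ is required — so the averaging operators are legitimate.)

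Finally I would assemble these pieces in order: (i) define $V$ and $p$, check sublinearity and finiteness; (ii) prove $p(\mathbf1)\ge 1$ via the $A_N$ averaging estimate — the crux; (iii) check $\lambda\le p(\lambda\mathbf1)$ for all $\lambda\in\mathbb R$ and apply Hahn--Banach to extend from $\mathrm{span}\{\mathbf1\}$; (iv) read off shift-invariance from $p(\pm(x-S_ix))\le 0$; (v) read off positivity from the inequality $\Phi(x)\ge\|x\| - p(\|x\|\mathbf1 - x)\ge 0$ for $x\ge0$. This gives the desired $\Phi$ and completes the lemma. In the next step of the paper, such a $\Phi$ will be used to manufacture a shift-invariant finitely additive probability on $\mathbb N^d$ assigning $G$ probability $0$, by choosing $x$ to be (a limit of) indicators concentrated off $G$; but that construction lies beyond the present lemma.
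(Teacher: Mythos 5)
Your proposal is correct and follows essentially the same route as the paper: both work with the subspace spanned by the coboundaries $S_i x - x$, establish the key norm separation from the constants via a Ces\`aro/averaging argument over boxes $[N]^d$ (using telescoping of each $S_i$-direction), extend by Hahn--Banach, and deduce positivity from the bound $\Phi(y)\le\|y\|$ applied to $\|x\|\mathbf 1 - x$. The only cosmetic difference is that you invoke the dominated-extension form of Hahn--Banach with the quotient seminorm $p$, whereas the paper uses the norm-preserving extension of the functional $w+c\mathbf 1\mapsto c$ on $W\oplus\mathbb R\mathbf 1$.
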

\begin{proof}[Proof of \Cref{lem:linear}]
  Consider the linear subspace of $\ell^\infty(\mathbb N^d)$ given by
  $$
  W\stackrel{{\rm def}}{=}\left\{\sum_{i=1}^d (S_i x_i -x_i): x_i\in\ell^\infty(\mathbb N^d)\right\}\,.
  $$
  Then we can claim that $\overline W$ and $\mathbb R\mathbf 1$ intersect trivially. To see this, let $c\neq 0$ and $w=\sum_{i=1}^d (S_ix_i-x_i)$, then
  \begin{equation}\label{eq:norm}
    \|w+c\mathbf 1\|\ge \frac{1}{n^d}\sum_{a\in [n]^d}|w(a)+c|\ge \frac{1}{n^d}\left|\sum_{a\in [n]^d} (w(a)+c)\right|\rightarrow |c|
  \end{equation}
  as $n\rightarrow\infty$.
  
  Now let $\Phi_0$ be a linear functional on $W\oplus \mathbb R\mathbf 1$ given by
  $$
  \Phi_0(w+c\mathbf 1)=c\,.
  $$
  By \eqref{eq:norm}, $\|\Phi_0\|\le 1$.  By Hahn-Banach Theorem, there exists an extension $\Phi$ of $\Phi_0$ to $\ell^\infty(\mathbb N^d)$ such that $\|\Phi\|\le 1$.

Now we check that such a linear functional $\Phi$ satisfies the requirements of the claim.
\begin{enumerate}
  \item Shift-invariance: by linearity $\Phi(S_i x)-\Phi(x)=\Phi(S_ix-x)=0$.
  \item Normalized: by construction.
  \item Positivity: if $x\in\ell^\infty(\mathbb N^d)$ is positive, then we can write $x=cy$ for some $c>0$ and $\|y\|\le 1$, and 
  \begin{align*}
  \Phi(x)=&c\Phi(y) = c(1-\Phi({\bf 1}-y))\ge c(1-\|{\bf 1}-y\|) \ge 0\,. \qedhere    
  \end{align*}
\end{enumerate}
\end{proof}

The usefulness of \Cref{lem:linear} is the following general construction of shift-invariant probability measures on $2^{\mathbb N^d}$.

For $X,A\subseteq \mathbb N^d$, define $s^{-A}(X)=\bigcup_{a\in A}s^{-a}(X)$, where $s^{-a}(X)=s_1^{-a_1}(\cdots s_d^{-a_d}(X))$ for $a=(a_1,...a_d)\in\mathbb N^d$.

\begin{lem}\label{lem:construction}
  Let $\mu_1$ be a finitely additive probability on $2^{\mathbb N^d}$. Define
  $\mu:2^{\mathbb N^d}\mapsto \mathbb R$ as
  \begin{align*}
    \mu(Z) = \Phi\left\{\left[\mu_1(s^{-a}(Z)):a\in \mathbb N^d\right]\right\}\,.
  \end{align*}
  Then $\mu$ is a finitely additive, shift-invariant probability on $2^{\mathbb N^d}$.
\end{lem}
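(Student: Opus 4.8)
The plan is to write $\mu$ as the composition $\Phi\circ T$, where $\Phi$ is the linear functional produced by \Cref{lem:linear} and $T\colon 2^{\mathbb N^d}\to\ell^\infty(\mathbb N^d)$ is the map sending a set $Z$ to the sequence $T(Z)=\big(\mu_1(s^{-a}(Z)):a\in\mathbb N^d\big)$. First I would check that $T$ really does land in $\ell^\infty(\mathbb N^d)$: since $\mu_1$ is a probability on $2^{\mathbb N^d}$, every coordinate $\mu_1(s^{-a}(Z))$ lies in $[0,1]$, so $\|T(Z)\|\le 1$ and $\mu(Z)=\Phi(T(Z))$ is a well-defined real number. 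With this in place, each of the three required properties of $\mu$ will follow by combining a matching property of $T$ with the corresponding property of $\Phi$ from \Cref{lem:linear}.

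Next I would record three elementary facts about the preimage operators $s^{-a}$, all immediate from the fact that $s^{-a}$ is the preimage operation under the injection $s^a:=s_1^{a_1}\circ\cdots\circ s_d^{a_d}$ of $\mathbb N^d$ into itself (here $e_i$ denotes the $i$th coordinate unit vector): first, $s^{-a}$ commutes with finite unions and preserves disjointness; second, $s^{-a}(\mathbb N^d)=\mathbb N^d$; third, $s^{-a}\big(s_i^{-1}(Z)\big)=s^{-(a+e_i)}(Z)$, because the shifts commute and compose additively. From the first two facts together with the finite additivity and normalization of $\mu_1$ I would deduce that $T(Z_1\cup Z_2)=T(Z_1)+T(Z_2)$ coordinatewise whenever $Z_1\cap Z_2=\emptyset$, that $T(\mathbb N^d)=\mathbf 1$, and that $T(Z)\ge 0$ coordinatewise for every $Z$. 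Applying, respectively, linearity of $\Phi$, the normalization $\Phi(\mathbf 1)=1$, and positivity of $\Phi$ then shows that $\mu$ is a nonnegative, finitely additive function with $\mu(\mathbb N^d)=1$.

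It remains to establish shift-invariance, which is the one place the construction does genuine work. Using the third fact above, I would observe that $T$ intertwines the set-level shift $s_i^{-1}$ with the sequence-level shift $S_i$ of \Cref{lem:linear}: for every $Z\subseteq\mathbb N^d$ and every $i\in[d]$,
$$
T\big(s_i^{-1}(Z)\big)(a)=\mu_1\big(s^{-(a+e_i)}(Z)\big)=T(Z)(a+e_i)=\big(S_iT(Z)\big)(a)\,,
$$
so that $T(s_i^{-1}(Z))=S_iT(Z)$. Since $\Phi$ is shift-invariant, $\mu(s_i^{-1}(Z))=\Phi\big(S_iT(Z)\big)=\Phi\big(T(Z)\big)=\mu(Z)$, which completes the proof.

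The argument is essentially bookkeeping, so I do not expect a serious obstacle. The only points requiring care are the verification of the three properties of $s^{-a}$ --- in particular the identity $s^{-a}(s_i^{-1}(Z))=s^{-(a+e_i)}(Z)$ and the ensuing intertwining relation $T\circ s_i^{-1}=S_i\circ T$ --- and keeping the index conventions consistent: with $\mathbb N=\{1,2,\dots\}$ one has $a+e_i\in\mathbb N^d$ whenever $a\in\mathbb N^d$, so $S_iT(Z)$ is indexed over the same set $\mathbb N^d$ as $T(Z)$ and the displayed computation is legitimate.
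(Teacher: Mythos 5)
Your proof is correct and follows essentially the same route as the paper's: both verify that each coordinate $\mu_1(s^{-a}(Z))$ is in $[0,1]$, that $s^{-a}$ preserves disjoint unions and fixes $\mathbb N^d$, and that $s^{-a}\circ s_i^{-1}=s^{-s_i(a)}$, then transfer positivity, normalization, linearity, and shift-invariance from $\Phi$. Your explicit intertwining identity $T\circ s_i^{-1}=S_i\circ T$ is just a cleaner packaging of the paper's final step.
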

\begin{proof}[Proof of \Cref{lem:construction}]
  First $\mu_1(s^{-a}(Z))\in [0,1]$ for all $a\in\mathbb R^d$, by positivity and normalization of $\Phi$ we have $\mu(Z)\in[0,1]$ for all $Z$.
  
  Second, when $Z=\mathbb N^d$ we have $s^{-a}(Z)=\mathbb N^d$ for all $a\in\mathbb N^d$, and hence $\mu(Z)=\Phi(\mathbf 1)=1$.
  
  Third, if $Z_1,Z_2\subseteq\mathbb N^d$ are disjoint, then $s^{-a}(Z_1\cup Z_2)=s^{-a}(Z_1)\cup s^{-a}(Z_2)$ and $s^{-a}(Z_1)\cap s^{-a}(Z_2)=\emptyset$.
Then finite additivity of $\mu$ follows from linearity of $\Phi$.

Finally, for $i\in[d]$, $s^{-a}(s_i^{-1}(Z))=s^{-s_i(a)}(Z)$, so the shift-invariance of $\mu$ follows from the shift-invariance of $\Phi$ (Property 1 of \Cref{lem:linear}).
\end{proof}
\begin{lem}\label{lem:equivalent}
  For $X\subseteq \mathbb N^d$, the following are equivalent.
  \begin{enumerate}
    \item $s^{-A}(X)\neq \mathbb N^d$ for any finite set $A\subset \mathbb N^d$.
    \item There is a shift-invariant finitely-additive probability $\mu$ on $2^{\mathbb N^d}$ such that $\mu(X)=0$.
  \end{enumerate}
\end{lem}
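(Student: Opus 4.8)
The plan is to prove the two implications separately; the substantive content is the direction $(1)\Rightarrow(2)$, while $(2)\Rightarrow(1)$ is an immediate consequence of finite subadditivity. For $(2)\Rightarrow(1)$: suppose $\mu$ is a shift-invariant finitely additive probability on $2^{\mathbb N^d}$ with $\mu(X)=0$, and suppose for contradiction that $s^{-A}(X)=\mathbb N^d$ for some finite $A\subset\mathbb N^d$. Iterating the identity $\mu(B)=\mu(s_i^{-1}(B))$ gives $\mu(s^{-a}(X))=\mu(X)=0$ for every $a\in\mathbb N^d$, so by finite subadditivity (which holds for finitely additive probabilities by monotonicity) $1=\mu(\mathbb N^d)=\mu\big(\bigcup_{a\in A}s^{-a}(X)\big)\le\sum_{a\in A}\mu(s^{-a}(X))=0$, a contradiction.

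For $(1)\Rightarrow(2)$, the key observation is that condition (1) is \emph{exactly} the statement that the complements $C_a:=\mathbb N^d\setminus s^{-a}(X)$, for $a\in\mathbb N^d$, enjoy the finite intersection property, since $\bigcap_{a\in A}C_a=\mathbb N^d\setminus s^{-A}(X)\neq\emptyset$ for every finite $A$. Hence the filter generated by $\{C_a:a\in\mathbb N^d\}$ extends, by Zorn's lemma, to an ultrafilter $\mathcal U$ on $\mathbb N^d$, and the associated $\{0,1\}$-valued set function $\mu_1$ defined by $\mu_1(S)=1$ iff $S\in\mathcal U$ is a finitely additive probability on all of $2^{\mathbb N^d}$ with $\mu_1(C_a)=1$, i.e.\ $\mu_1(s^{-a}(X))=0$, for every $a$.

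It then remains to feed this $\mu_1$ into \Cref{lem:construction}: the resulting $\mu(Z)=\Phi\big([\mu_1(s^{-a}(Z)):a\in\mathbb N^d]\big)$ is a shift-invariant finitely additive probability on $2^{\mathbb N^d}$, and since $\mu_1(s^{-a}(X))=0$ for all $a$, the argument of $\Phi$ is the zero vector, so $\mu(X)=\Phi(\mathbf 0)=0$ by linearity of $\Phi$. This establishes (2).

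I do not anticipate a real obstacle here: the routine verifications are monotonicity/finite subadditivity for finitely additive probabilities and the standard fact that an ultrafilter induces a $\{0,1\}$-valued finitely additive probability on the power set. The only genuine idea is recognizing (1) as the finite intersection property for the sets $C_a$, after which \Cref{lem:construction} supplies the shift-invariant extension for free. If one prefers to avoid ultrafilters, an alternative is to note that $\mu_1\equiv 1$ on the collection $\{\mathbb N^d\}\cup\{C_a:a\in\mathbb N^d\}$ satisfies the inequality hypothesis of \Cref{thm:three} (any finite subcollection has nonempty intersection, so the only relevant domination inequalities are trivially compatible), hence extends to a finitely additive probability on $2^{\mathbb N^d}$; the ultrafilter route is simply shorter to state.
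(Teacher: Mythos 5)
Your proof is correct and follows the same architecture as the paper's: the $(2)\Rightarrow(1)$ direction via shift-invariance plus finite subadditivity, and the $(1)\Rightarrow(2)$ direction by first building an auxiliary finitely additive probability $\mu_1$ with $\mu_1(s^{-a}(X))=0$ for every $a$, then symmetrizing through \Cref{lem:construction} so that $\mu(X)=\Phi(\mathbf 0)=0$. The one point of divergence is how $\mu_1$ is obtained: the paper applies \Cref{thm:three} to the collection $\{\mathbb N^d\}\cup\{s^{-A}(X):A\text{ finite}\}$ with $\mu_0\equiv 0$ off $\mathbb N^d$ (the hypothesis of \Cref{thm:three} holding precisely because no finite union $s^{-A}(X)$ covers $\mathbb N^d$), whereas you reformulate condition (1) as the finite intersection property of the complements $C_a$ and take the $\{0,1\}$-valued measure of an ultrafilter containing them. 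The two constructions are interchangeable here --- your ultrafilter route is more self-contained and makes the role of hypothesis (1) transparent, while the paper's route reuses machinery already cited --- and you correctly note the \Cref{thm:three} alternative yourself. No gaps.
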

\begin{proof}[Proof of \Cref{lem:equivalent}]
  ``$2\Rightarrow 1$'': If $\mu(X)=0$, then $\mu(s^{-a}(X))=0$ for every $a$.  Hence $\mu(s^{-A}(X))=0$ for any finite $A$.
  
  ``$1\Rightarrow 2$'': Let $\mathcal C$ be a family consisting of $\mathbb N^d$ and all sets of the form $s^{-A}(X)$ with finite $A$. Let $\mu_0:\mathcal C\mapsto \mathbb R^+$ be defined as $\mu_0(\mathbb N^d)=1$, $\mu_0(Y)=0$ if $Y\neq\mathbb N^d$.  The assumption that $s^{-A}(X)\neq \mathbb N^d$ for any finite set $A$ implies that, according to Theorem 1 of \cite{kadane-ohagan1995}, $\mu_0$ can be extended to $2^{\mathbb N^d}$. Let $\mu_1$ be such an extended finitely additive probability and let
  $$\mu(Z)=\Phi((\mu_1(s^{-a}(Z)):a\in\mathbb N^d))
  $$
  where $\Phi$ is the functional constructed in \Cref{lem:linear}.
  
  \Cref{lem:construction} ensures that $\mu$ is a shift-invariant finitely additive probability. On the other hand, $\mu_1(s^{-a}(X))=\mu_0(s^{-a}(X))=0$ for all $a\in\mathbb N^d$.  By construction, $\mu(X)=\Phi(\mathbf 0)=0$.
\end{proof}

\begin{thm}\label{thm:shift}
  $\ell(G,\mathcal M_S^2)=0$.
\end{thm}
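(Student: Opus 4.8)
The plan is to apply \Cref{lem:equivalent} with $d=2$ and $X=G$: it suffices to exhibit, for every finite set $A\subset\mathbb N^2$, a pair $(x,y)\in\mathbb N^2$ that lies in no translate $s^{-a}(G)$ with $a\in A$; equivalently, such that $(x,y)+a\notin G$ for all $a\in A$, i.e.\ $\gcd(x+a_1,y+a_2)>1$ for every $a=(a_1,a_2)\in A$. So the entire content of the theorem is the following combinatorial-arithmetic claim: for any finite list of shift vectors, one can find a base point all of whose prescribed shifts are non-coprime.

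The key steps, in order, are as follows. First, reduce to showing $s^{-A}(G)\neq\mathbb N^2$ for finite $A$; this is immediate from \Cref{lem:equivalent}(1)$\Rightarrow$(2) once the set-theoretic claim is established, and then \eqref{eq:chain} together with $\ell\ge 0$ pins $\ell(G,\mathcal M_S^2)=0$. Second, enumerate $A=\{a^{(1)},\dots,a^{(m)}\}$ and pick $m$ distinct primes $p_1,\dots,p_m$, one ``assigned'' to each shift vector. Third, use the Chinese remainder theorem to choose $x$ and $y$ so that, for each $\ell\in[m]$, the point $(x+a^{(\ell)}_1,\,y+a^{(\ell)}_2)$ is divisible coordinatewise by $p_\ell$: this requires $x\equiv -a^{(\ell)}_1\pmod{p_\ell}$ and $y\equiv -a^{(\ell)}_2\pmod{p_\ell}$ for all $\ell$, which is a consistent system of congruences modulo the distinct primes $p_1,\dots,p_m$ and hence solvable; translating the solution into $\mathbb N^2$ if necessary, we get a genuine pair of positive integers. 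Then $\gcd(x+a^{(\ell)}_1,y+a^{(\ell)}_2)\ge p_\ell>1$, so $(x,y)+a^{(\ell)}\notin G$ for every $\ell$, i.e.\ $(x,y)\notin s^{-A}(G)$. Hence $s^{-A}(G)\neq\mathbb N^2$, and \Cref{lem:equivalent} yields a shift-invariant finitely additive $\mu\in\mathcal M_S^2$ with $\mu(G)=0$, so $\ell(G,\mathcal M_S^2)=0$; combined with \Cref{thm:g_U_residual} and \eqref{eq:chain} this also gives $\ell(G,\mathcal M_R^2)=0$, completing \eqref{eq:main_2}.

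I do not expect a serious obstacle here: the only mild subtlety is bookkeeping in \Cref{lem:equivalent}, where $s^{-a}$ is defined for $a\in\mathbb N^d$, so one must be slightly careful that ``lies in no translate'' is phrased in the direction the lemma uses ($s^{-A}(X)\neq\mathbb N^d$ means some point is missed by all the inverse shifts), but unwinding definitions shows this is exactly the statement that some $(x,y)$ has $(x,y)+a\notin G$ for all $a\in A$, which is what the CRT construction provides. A second point worth a sentence is ensuring positivity of the constructed $x,y$: since the congruence conditions are preserved under adding multiples of $p_1\cdots p_m$, we may take $x,y$ as large as we like, so landing in $\mathbb N^2$ is free. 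Everything else is a routine invocation of \Cref{lem:equivalent} and the chain \eqref{eq:chain}.
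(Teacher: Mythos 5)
Your proposal is correct and is essentially identical to the paper's argument: reduce via \Cref{lem:equivalent} to showing $s^{-A}(G)\neq\mathbb N^2$ for finite $A$, then assign a distinct prime to each shift vector and use the Chinese remainder theorem to produce a base point all of whose shifts are non-coprime. The extra remarks about positivity of the constructed pair and the direction of the inverse shifts are harmless refinements of the same proof.
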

\begin{proof}[Proof of \Cref{thm:shift}]
  According to \Cref{lem:equivalent}, it suffices to prove that $s^{-A}(G)\neq \mathbb N^2$ for every finite $A\subset \mathbb N^2$.
  
  Let $(a_1,b_1),(a_2,b_2),...,(a_m,b_m)$ be enumeration of all elements of $A$.  Let $p_1,...,p_m$ be $m$ arbitrary distinct prime numbers. By Chinese remainder theorem there exist $a,b\in \mathbb N$ such that
  \begin{align*}
  a +a_i\equiv &0~~{\rm mod}~~p_i,~~\forall~i\in[m]\,,\\
  b +b_i\equiv &0~~{\rm mod}~~p_i,~~\forall~i\in[m]\,.
  \end{align*}
Then $(a,b)\notin s^{-A}(G)$.
\end{proof}

\section{Countably additive probability}\label{sec:countably}
In order to keep countable additivity in the probability, we must work with a smaller $\sigma$-field of subsets of $\mathbb N^2$.

For $i\in \mathbb N$ let $p_i$ be the $i$th prime number and define
 $$A_{i}=\{x\in\mathbb N: x\equiv 0 \mod p_i\}\,.$$
For finite disjoint subsets $I$, $J$ of $\mathbb N$ let
$$
A_{I,J}=\left(\cap_{i\in I} A_i\right)\bigcap\left(\cap_{i\in J} A_i^c\right)
$$
be the set of positive integers divisible by prime numbers in $I$ but not by those in $J$. It is allowed to have $I=J=\emptyset$, and we define $A_{\emptyset,\emptyset}=\mathbb N$.
Define
\begin{align*}
  \mathcal C=\left\{\bigcup_{k=1}^K A_{I_k,J_k}: K\in\mathbb N,I_k\cap J_k=\emptyset,|I_k|,|J_k|<\infty
  \right\}\bigcup \{\emptyset\}
\end{align*}


\begin{lem}\label{lemma:one}
$\mathcal{C}$ is a field of subsets of $\mathbb N$.
\end{lem}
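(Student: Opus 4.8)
The plan is to verify directly that $\mathcal{C}$ is closed under complementation and finite unions (and contains $\emptyset$ and $\mathbb N$), which suffices for a field. First I would record the basic combinatorial facts about the building blocks $A_{I,J}$. The key observation is that for disjoint finite $I,J$ and disjoint finite $I',J'$, the intersection $A_{I,J}\cap A_{I',J'}$ is either empty (this happens exactly when $I\cap J'\neq\emptyset$ or $I'\cap J\neq\emptyset$, i.e.\ when the two constraints contradict each other) or equals $A_{I\cup I',\,J\cup J'}$, which is again a legitimate block since $(I\cup I')\cap(J\cup J')=\emptyset$ in the non-contradictory case. This shows the collection of blocks is closed under pairwise intersection up to $\emptyset$.

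Next I would handle complementation of a single block: $A_{I,J}^c=\bigcup_{i\in I}A_i^c\;\cup\;\bigcup_{j\in J}A_j$. Each $A_i^c$ is the block $A_{\emptyset,\{i\}}$ and each $A_j$ is the block $A_{\{j\},\emptyset}$, so $A_{I,J}^c$ is a finite union of blocks, hence in $\mathcal C$. Then for a general element $C=\bigcup_{k=1}^K A_{I_k,J_k}\in\mathcal C$, De Morgan gives $C^c=\bigcap_{k=1}^K A_{I_k,J_k}^c$. Writing each $A_{I_k,J_k}^c$ as a finite union of blocks and distributing the intersection over the unions expresses $C^c$ as a finite union of finite intersections of blocks; by the intersection fact above, each such intersection is either $\emptyset$ or a single block, so $C^c\in\mathcal C$. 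Closure under finite unions is immediate from the definition of $\mathcal C$ (concatenate the index lists), and $\emptyset\in\mathcal C$ by fiat while $\mathbb N=A_{\emptyset,\emptyset}\in\mathcal C$.

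The only mild subtlety—and the place I would be most careful—is bookkeeping the ``contradictory'' cases so that every intersection that arises in the distribution step is correctly identified as empty or as a well-formed block with disjoint index sets; once the intersection lemma for blocks is stated cleanly, the rest is routine De Morgan and distributivity. I would present the intersection and single-block complement facts as two short displayed computations, then assemble the general complement in one paragraph, and finish by noting the union and $\emptyset,\mathbb N$ cases in a sentence.
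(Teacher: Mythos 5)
Your proof is correct, and its logical skeleton is the same as the paper's (closure under unions is immediate; reduce complements and intersections to statements about the individual blocks $A_{I,J}$, then assemble via De Morgan and distributivity). Where you differ is in how the block-level facts are established: the paper identifies each $A_{I,J}$ with a cylinder set $\{0\}^{I}\times\{1\}^{J}\times\{0,1\}^{(I\cup J)^{c}}$ in $\{0,1\}^{\mathbb N}$ and does the combinatorics inside the finite cube $\{0,1\}^{T}$, whereas you compute directly with the divisibility constraints. Your route is more elementary and in fact yields sharper statements: the intersection of two blocks is a single block or $\emptyset$ (the paper only exhibits it as a finite union of blocks), and your complement formula $A_{I,J}^{c}=\bigcup_{i\in I}A_{\emptyset,\{i\}}\cup\bigcup_{j\in J}A_{\{j\},\emptyset}$ is simpler than the paper's disjoint decomposition over all $I'\subseteq I\cup J$ with $I'\neq I$ (though the paper's version has the advantage of being a \emph{disjoint} union, which is convenient when the measure $P$ is later defined on disjoint unions). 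The cylinder representation you avoid is not wasted in the paper, since it is reused in the proof of Theorem 6.2 via the product measure $Q$ on $\{0,1\}^{\mathbb N}$; but for the present lemma your direct argument is complete, provided you state the two-block intersection fact cleanly and note that it extends to finite intersections by induction, and that empty unions and contradictory intersections land on $\emptyset\in\mathcal C$.
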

\begin{proof}[Proof of \Cref{lemma:one}]
  Consider $\mathcal A=\{0,1\}^{\mathbb N}$.  For finite disjoint $I,J\subset\mathbb N$, we can represent
$A_{I,J}$ as a subset of $\mathcal A$ by $A_{I,J}\Leftrightarrow\{0\}^I \times \{1\}^J \times \{0,1\}^{(I\cup J)^c}$.
For example when $I=\{2\}$, $J=\{1,3\}$, then the corresponding subset of $\mathcal A$ is 
$\{x\in \{0,1\}^{\mathbb N}: x_1=0,x_2=1,x_3=0\}$, the cylinder in $\{0,1\}^{\mathbb N}$ with base $(0,1,0)$.

It is easy to check $\emptyset$ and $A_{\emptyset,\emptyset}=\mathbb N$ are in $\mathcal C$. We proceed to make the following three observations.
\begin{itemize}
\item[(a)] $\mathcal{C}$ is closed under finite unions.

 Let 
  $$
 C_1 = \bigcup^{K_1}_{k=1} A_{I^1_k,J^1_k} \mbox{ and } C_2 = \bigcup^{K_2}_{k=K_1+1} A_{I^2_k,J^2_k}.
 $$
and 
  $$
 \begin{array}{lcl}
 I_k = I^1_k \mbox{ and } J_k = J^1_k & \mbox{ for } & 1 \leq k \leq K_1\\
 I_k = I^2_k \mbox{ and } J_k = J^2_k & \mbox{ for } &K_1 + 1 \leq k \leq K_2.
 \end{array}
 $$
 Then
 $$
 C_1 \cup C_2 = \cup^{K_1+K_2}_{k=1} A_{I_k, J_k} \in \mathcal{C}.
 $$

 \item[(b)] $A^c_{I,J} \in \mathcal{C}$.
 
  Now assume $(I,J)\neq (\emptyset,\emptyset)$.
Use the product representation to write $A_{I,J}^c$:
$$
A_{I,J}^c = \left[\{0,1\}^{I\cup J}\backslash \left(\{0\}^I\times \{1\}^J\right)\right]\times \{0,1\}^{(I\cup J)^c}\,.
$$
and
$$\{0,1\}^{I\cup J}\backslash \{0\}^I\times \{1\}^J=\bigcup_{I'\subseteq (I\cup J),I'\neq I} \{0\}^{I'}\times\{1\}^{(I\cup J)\backslash I'}\,.$$
This shows that $A_{I,J}^c=\bigcup_{I'\subseteq (I\cup J),I'\neq I} A_{I',(I\cup J)\backslash I'}\in\mathcal C$.
\item[(c)] $A_{I_1,J_1} \cap A_{I_2,J_2} \in \mathcal{C}$.

For finite disjoint $(I_j,J_j)$, $(j=1,2)$, let $T=\cup(I_1,J_1,I_2,J_2)$. We consider the augmented representation of $A_{I_1,J_1}$ and $A_{I_2,J_2}$
\begin{align*}
  A_{I_1,J_1} = & \{0\}^{I_1}\times \{1\}^{J_1}\times \{0,1\}^{T\backslash(I_1\cup J_1)}\times \{0,1\}^{T^c}\\
  A_{I_2,J_2} = & \{0\}^{I_2}\times \{1\}^{J_2}\times \{0,1\}^{T\backslash(I_2\cup J_2)}\times \{0,1\}^{T^c}
\end{align*}
Let $B_j=\{0\}^{I_j}\times \{1\}^{J_j}\times \{0,1\}^{T\backslash(I_j\cup J_j)}$ for $j=1,2$.
Then each $B_j$ is a subset of $\{0,1\}^T$, which is a finite set.
Now let $C=B_1\cap B_2$, then $C$ is a subset of $\{0,1\}^T$. So there exists a subset $\mathcal I\subseteq T$, such that
$$
C=\bigcup_{I'\in\mathcal I} \{0\}^{I'}\times \{1\}^{T\backslash I'}.
$$
Since $T$ is finite, the union in the above expression for $C$ is finite. Thus we proved that
$A_{I_1,J_1}\cap A_{I_2,J_2} \in \mathcal C$.
%
%
%
%
%
\end{itemize}
The three observations (a-c) are sufficient to imply further claims such as that $\mathcal C$ is closed under complement, which concludes the proof.
 \end{proof}

\textbf{Remark.}
  Note that although $\mathcal C$ has an isomorphism between the subsets of $\mathbb N$ and those in $\mathcal A$,
  the generated $\sigma$-fields are different.
  In fact $\cap_{i=1}^\infty A_{\{i\},\emptyset}=\emptyset$, but $\cap_{i=1}^\infty \{0\}^{\{i\}}\times \{0,1\}^{\mathbb N \backslash\{i\}}=\{0\}^{\mathbb N}\neq \emptyset$.

Now we are ready to define the uniform probability measure on $\mathcal C$.
Let $P:\mathcal C\mapsto [0,1]$ be that
if $C=\cup_{k=1}^K A_{I_k,J_k}$ for disjoint sets $\{A_{I_k,J_k}:1\le k\le K\}$, then
$$
P\{C\}=\sum_{k=1}^K P\{A_{I_k,J_k}\}
$$
with
\begin{equation}\label{eq:unif1}
P\{A_{I,J}\}=\prod_{i\in I} p_i^{-1}\prod_{i\in J}(1-p_i^{-1})\,.
\end{equation}
We further define $P\{\emptyset\}=0$ and $P\{\mathbb N\}=1$.

Equation \eqref{eq:unif1} reflects the uniformity of $P$:  For distinct prime numbers $p$ and $q$
\begin{enumerate}
  \item [(i)] the probability of being divisible by a prime number $p$ is $p^{-1}$;
  \item [(ii)]  being divisible by $p$ and being divisible by $q$ are independent events.
\end{enumerate}
 
\begin{thm}\label{thm:one}
  $P$ is a probability on $\mathcal C$ and can be uniquely extended to $\mathcal F=\sigma(\mathcal C)$.
\end{thm}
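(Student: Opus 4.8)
The plan is to read \Cref{thm:one} as a Carath\'eodory extension problem and to assemble the three pieces it contains — finite additivity of $P$ on $\mathcal C$, existence of a countably additive extension to $\mathcal F$, and uniqueness of that extension — one at a time. For finite additivity I would first verify that $P$ is well defined, i.e.\ that $P(C)$ does not depend on which disjoint representation $C=\bigcup_k A_{I_k,J_k}$ is used. The engine is a common refinement: for $m\notin I\cup J$ one has $A_{I,J}=A_{I\cup\{m\},J}\sqcup A_{I,J\cup\{m\}}$, and by \eqref{eq:unif1}, $P(A_{I\cup\{m\},J})+P(A_{I,J\cup\{m\}})=\big(p_m^{-1}+(1-p_m^{-1})\big)P(A_{I,J})=P(A_{I,J})$, so splitting an atom along one more prime preserves the value; iterating, any two disjoint representations of a given $C$ refine to representations all of whose atoms have index set equal to the common finite set $T$ obtained by unioning every index appearing in either representation, and since the full-support-$T$ atoms partition $\mathbb N$, the sum $\sum_k P(A_{I_k,J_k})$ is the same for both. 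Nonnegativity is immediate, $P(\mathbb N)=P(A_{\emptyset,\emptyset})=1$ by convention and the empty-product reading of \eqref{eq:unif1}, and finite additivity then follows because $\mathcal C$ is a field (\Cref{lemma:one}). Equivalently, one may use the field isomorphism $A_{I,J}\leftrightarrow\{0\}^I\times\{1\}^J\times\{0,1\}^{(I\cup J)^c}$ from the proof of \Cref{lemma:one}: under it $P$ is exactly the cylinder premeasure of the product Bernoulli measure $\nu$ on $\{0,1\}^{\mathbb N}$ with $\nu(\{s:s_i=0\})=p_i^{-1}$, which is finitely additive on cylinders.

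Uniqueness is the easy part: $\mathcal C$ is a field, hence a $\pi$-system, and it generates $\mathcal F=\sigma(\mathcal C)$; as $P(\mathbb N)=1<\infty$, the Dynkin $\pi$--$\lambda$ theorem forces any two countably additive measures on $\mathcal F$ that agree with $P$ on $\mathcal C$ to coincide. Thus everything reduces to \emph{existence}, and by Carath\'eodory's theorem this is the assertion that $P$ is countably additive on the field $\mathcal C$ — equivalently, that $P$ is continuous from above at $\emptyset$: whenever $C_1\supseteq C_2\supseteq\cdots$ with each $C_n\in\mathcal C$ and $\bigcap_n C_n=\emptyset$, one has $P(C_n)\to 0$. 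This is the step I expect to be the main obstacle.

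The natural attempt is to transport the question to the compact space $\{0,1\}^{\mathbb N}$ via the divisibility-pattern map $x\mapsto(\mathbf 1[p_i\mid x])_{i\in\mathbb N}$: each $C_n$ is the preimage of a clopen cylinder $\widetilde C_n$, the $\widetilde C_n$ decrease, $P(C_n)=\nu(\widetilde C_n)$, so by compactness $\widetilde C_\infty:=\bigcap_n\widetilde C_n$ is nonempty and, by countable additivity of $\nu$, $P(C_n)\downarrow\nu(\widetilde C_\infty)$; it then remains to show $\nu(\widetilde C_\infty)=0$. The delicate point is precisely the phenomenon highlighted in the Remark after the proof of \Cref{lemma:one}: a point of $\widetilde C_\infty$ need not be the divisibility pattern of any integer — e.g.\ $\bigcap_i A_i=\emptyset$ in $\mathbb N$, whereas the matching cylinders in $\{0,1\}^{\mathbb N}$ share the all-zeros sequence — so $\bigcap_n C_n=\emptyset$ does \emph{not} imply $\widetilde C_\infty=\emptyset$, and plain compactness is not enough. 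Closing this gap must exploit the specific weights: since $\sum_i p_i^{-1}=\infty$ (Mertens' theorem, or a Borel--Cantelli argument: $\nu$-almost every sequence has $s_i=0$ for infinitely many $i$, i.e.\ lies outside the image of the pattern map), one should try to show that the residual set $\widetilde C_\infty$ responsible for the equality $\bigcap_n C_n=\emptyset$ is necessarily $\nu$-null. Establishing this quantitative bridge between ``empty intersection in $\mathbb N$'' and ``zero $\nu$-measure in $\{0,1\}^{\mathbb N}$'' is where the real difficulty lies; once it is in hand, Carath\'eodory's theorem supplies the countably additive extension and, with the uniqueness step above, completes the proof.
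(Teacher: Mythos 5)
Your handling of well-definedness, finite additivity, and uniqueness is correct (and more careful than the paper, which does not address well-definedness at all), and you have isolated the crux precisely: countable additivity of $P$ on the field $\mathcal C$, together with the fact that the bijection $A_{I,J}\leftrightarrow\{0\}^I\times\{1\}^J\times\{0,1\}^{(I\cup J)^c}$ respects only \emph{finite} Boolean operations. But you leave exactly that step unproved (``this is where the real difficulty lies''), so the proposal is incomplete --- and, worse, the gap cannot be closed, because the ``quantitative bridge'' you hope for is false and with it the countable-additivity claim of \Cref{thm:one}. For a finite $S\subset\mathbb N$ let $N_S=\{x\in\mathbb N:\{i:p_i\mid x\}=S\}$; every positive integer lies in exactly one $N_S$, and there are countably many finite $S$. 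Fix $\varepsilon\in(0,1)$, enumerate the finite sets as $S_1,S_2,\dots$, and, using $\sum_i p_i^{-1}=\infty$ (so $\prod_{j\le n}(1-p_j^{-1})\to 0$), choose $n_k$ with
\[
P\{A_{S_k,[n_k]\setminus S_k}\}=\prod_{i\in S_k}p_i^{-1}\prod_{j\in[n_k]\setminus S_k}(1-p_j^{-1})<\varepsilon\,2^{-k}.
\]
Then $U_k:=A_{S_k,[n_k]\setminus S_k}\in\mathcal C$ contains $N_{S_k}$, so $\mathbb N=\bigcup_k U_k$ while $\sum_k P\{U_k\}<\varepsilon$; disjointifying ($V_k=U_k\setminus\bigcup_{j<k}U_j\in\mathcal C$, since $\mathcal C$ is a field) exhibits $\mathbb N$ as a countable disjoint union of members of $\mathcal C$ with total $P$-measure below $\varepsilon<1=P\{\mathbb N\}$. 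Equivalently, $F_m=\mathbb N\setminus\bigcup_{k\le m}U_k$ decreases to $\emptyset$ with $P\{F_m\}>1-\varepsilon$, so $P$ is not continuous from above at $\emptyset$, is not countably additive on $\mathcal C$, and admits no countably additive extension to $\sigma(\mathcal C)$.

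The same example shows that your diagnosis of the danger was exactly right and that the paper's own proof founders on it: the identity $Q\{A_{I,J}\}=\sum_k Q\{A_{I_k,J_k}\}$ invoked there presupposes that the cylinder corresponding to $A_{I,J}$ is the disjoint union of the cylinders corresponding to the $A_{I_k,J_k}$, which is precisely what the Remark after \Cref{lemma:one} warns can fail; in the example above the residual set in $\{0,1\}^{\mathbb N}$ has $Q$-measure at least $1-\varepsilon$, not zero. Your own Borel--Cantelli observation is the tell: since $\sum_i p_i^{-1}=\infty$, $\nu$-almost every $s$ has $s_i=0$ infinitely often, so the set of genuine divisibility patterns of integers (which have $s_i=1$ eventually) is $\nu$-\emph{null} --- the residual set carries essentially all of the mass rather than none of it. The construction can only be salvaged by changing the sample space, e.g.\ working directly on $\{0,1\}^{\mathbb N}$ with the product measure $Q$, or on $\hat{\mathbb Z}=\prod_p\mathbb Z_p$ with Haar measure, where the computation of \Cref{thm:two} does go through.
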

\begin{proof}[Proof of \Cref{thm:one}]
  We only need to prove countable additivity of $P$ on $\mathcal C$.  The second part follows from Carath\'{e}odory's extension.
  
  Let $A_{I,J}=\bigcup_{k=1}^\infty A_{I_k,J_k}$, where $\{A_{I_k,J_k}:k\ge 1\}$ are disjoint with $I_k$, $J_k$ finite and disjoint.
  Now define $Q$ to be the product measure on $\mathcal A$ with marginal $Q_i$ being Bernoulli$(1-p_i^{-1})$.
The existence and  uniqueness of $Q$ is guaranteed by Kolmogorov's extension.
  
  Then $P$ and $Q$ agree on $\mathcal C$.  Since $Q$ is a probability measure we have
  \begin{align*}
    P\{A_{I,J}\}=&Q\{A_{I,J}\}=\sum_{k=1}^\infty Q\{A_{I_k,J_k}\}=\sum_{k=1}^\infty P\{A_{I_k,J_k}\}\,.\qedhere
  \end{align*} 
\end{proof}

%

\begin{thm}\label{thm:two}
Let $P_2$ be the product measure of $P$ on $\mathbb N^2$. Then  $$
  P_2\{G\}=6/\pi^2\,.
  $$
\end{thm}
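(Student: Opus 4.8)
The plan is to express $G$ through the generating events $A_i$ and then exploit the independence encoded in \eqref{eq:unif1}. Writing $p_1,p_2,\dots$ for the primes and $A_i=\{x\in\mathbb N:\,p_i\mid x\}$, a pair $(x,y)$ is coprime exactly when no prime divides both coordinates, so
\[
G=\bigcap_{i=1}^{\infty}\bigl(A_i\times A_i\bigr)^{c}.
\]
Each $A_i\in\mathcal C\subset\mathcal F$, so $A_i\times A_i$ is a measurable rectangle, $G$ lies in the product $\sigma$-field $\mathcal F\otimes\mathcal F$, and $P_2\{G\}$ is well defined.

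Next I would record that, under $P$, the events $A_1,A_2,\dots$ are mutually independent with $P\{A_i\}=p_i^{-1}$: this is precisely the content of \eqref{eq:unif1}, since for every finite disjoint $I,J$ the quantity $P\{A_{I,J}\}=\prod_{i\in I}p_i^{-1}\prod_{j\in J}(1-p_j^{-1})$ is exactly the factorization that characterizes mutual independence (checked first for finite sub-collections, hence for the whole sequence). Setting $E_i=A_i\times A_i$, for any finite $S\subset\mathbb N$ one has $\bigcap_{i\in S}E_i=\bigl(\bigcap_{i\in S}A_i\bigr)\times\bigl(\bigcap_{i\in S}A_i\bigr)$, so
\[
P_2\Bigl\{\bigcap_{i\in S}E_i\Bigr\}=\Bigl(\prod_{i\in S}p_i^{-1}\Bigr)^{2}=\prod_{i\in S}P_2\{E_i\},\qquad P_2\{E_i\}=p_i^{-2},
\]
i.e.\ $E_1,E_2,\dots$ are mutually independent under $P_2$.

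Finally, since $P_2$ is countably additive and the sets $\bigcap_{i=1}^{N}E_i^{c}$ decrease to $G$, continuity from above together with independence of the complements gives
\[
P_2\{G\}=\lim_{N\to\infty}\prod_{i=1}^{N}\bigl(1-p_i^{-2}\bigr)=\prod_{p\text{ prime}}\bigl(1-p^{-2}\bigr)=\frac{1}{\zeta(2)}=\frac{6}{\pi^{2}},
\]
the last two steps being Euler's product for the zeta function and $\zeta(2)=\pi^2/6$; the product converges to a nonzero limit because $\sum_p p^{-2}<\infty$. I do not anticipate a real obstacle: the only points meriting care are the measurability of $G$ in $\mathcal F\otimes\mathcal F$ and the passage from mutual independence of the $A_i$ under $P$ to that of the $E_i$ under $P_2$ — both settled by the rectangle identity above — and it is perhaps worth remarking that this is the countably additive incarnation of the Euler-product heuristic underlying \eqref{eq:one}.
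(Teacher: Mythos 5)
Your proof is correct and follows essentially the same route as the paper: write $G=\bigcap_{i}(A_i\times A_i)^c$, use the mutual independence of the $A_i$ encoded in \eqref{eq:unif1} to get independence of the rectangles $A_i\times A_i$ under $P_2$, and pass to the limit to obtain the Euler product $\prod_p(1-p^{-2})=6/\pi^2$. The only difference is that you make explicit the measurability of $G$ and the continuity-from-above step, which the paper leaves implicit.
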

\begin{proof}[Proof of \Cref{thm:two}]
  ${\rm gcd}(x,y)=1$ if and only if $(x,y)\in (A_{\{i\},\emptyset}\times A_{\{i\},\emptyset})^c$ for all $i$. By independence between $A_{\{i\},\emptyset}$ as $i$ changes,
  \begin{align*}
    P_2\{G\}=&\prod_{i=1}^\infty (1-p_{i}^{-2})=\frac{6}{\pi^2}\,. \qedhere
  \end{align*}
  %
\end{proof}

\section{Conclusion}
The probability assigned to the set of $G$ of relatively prime integers depends on the sense of ``uniform'' probability being used.  When the class of finitely additive probabilities defined by relative frequency is used, $P(G)=6/\pi^2$.  Similarly, when the countably additive probability defined in \Cref{sec:countably}, $P\{G\}=6/\pi^2$ is the only value supported.  However, when the finitely additive classes defined by shift invariance or residue classes are involved, there are elements of those classes satisfying $P\{G\}=x$ if and only if $x\in[0,6/\pi^2]$.

\section*{Acknowledgements}
The authors would like to thank Boris Bukh for pointing out \Cref{thm:shift} and the idea of its proof.

\bibliographystyle{imsart-nameyear}
\bibliography{coprime}

\begin{thebibliography}{10}

\bibitem[\protect\citeauthoryear{Abrams and Paris}{1992}]{abrams-paris1992}
\begin{barticle}[author]
\bauthor{\bsnm{Abrams},~\bfnm{A.}\binits{A.}} \AND
  \bauthor{\bsnm{Paris},~\bfnm{M.}\binits{M.}}
(\byear{1992}).
\btitle{The probability that (a,b)=1}.
\bjournal{The College Math Journal}
\bvolume{23}
\bpages{47}.
\end{barticle}
\endbibitem

\bibitem[\protect\citeauthoryear{Bhaskara~Rao and
  Bhaskara~Rao}{1983}]{rao-rao1983}
\begin{bbook}[author]
\bauthor{\bsnm{Bhaskara~Rao},~\bfnm{K.~P.~S.}\binits{K.~P.~S.}} \AND
  \bauthor{\bsnm{Bhaskara~Rao},~\bfnm{M.}\binits{M.}}
(\byear{1983}).
\btitle{Theory of Charges}.
\bpublisher{Academic Press}, \baddress{New York}.
\end{bbook}
\endbibitem

\bibitem[\protect\citeauthoryear{Billingsley}{1995}]{billingsley1995probability}
\begin{bbook}[author]
\bauthor{\bsnm{Billingsley},~\bfnm{Patrick}\binits{P.}}
(\byear{1995}).
\btitle{Probability and Measure}.
\bpublisher{John Wiley\&Sons, New York}.
\end{bbook}
\endbibitem

\bibitem[\protect\citeauthoryear{De~Finetti}{1937}]{deFinetti1937prevision}
\begin{binproceedings}[author]
\bauthor{\bsnm{De~Finetti},~\bfnm{Bruno}\binits{B.}}
(\byear{1937}).
\btitle{La pr{\'e}vision: ses lois logiques, ses sources subjectives}.
In \bbooktitle{Annales de l'institut Henri Poincar{\'e}}
\bvolume{7}
\bpages{1--68}.
\end{binproceedings}
\endbibitem

\bibitem[\protect\citeauthoryear{Hardy and Wright}{2008}]{hardy-wright2008}
\begin{bbook}[author]
\bauthor{\bsnm{Hardy},~\bfnm{G.~H.}\binits{G.~H.}} \AND
  \bauthor{\bsnm{Wright},~\bfnm{E.~M.}\binits{E.~M.}}
(\byear{2008}).
\btitle{An Introduction to the Theory of Numbers},
\bedition{6th} ed.
\bpublisher{Oxford University Press}.
\end{bbook}
\endbibitem

\bibitem[\protect\citeauthoryear{Kadane and O'Hagan}{1995}]{kadane-ohagan1995}
\begin{barticle}[author]
\bauthor{\bsnm{Kadane},~\bfnm{J.~B.}\binits{J.~B.}} \AND
  \bauthor{\bsnm{O'Hagan},~\bfnm{A.}\binits{A.}}
(\byear{1995}).
\btitle{Using Finitely Additive Probability: {U}niform Distributions on the
  Natural Numbers}.
\bjournal{Journal of the American Statistical Association}
\bvolume{90}
\bpages{626--631}.
\end{barticle}
\endbibitem

\bibitem[\protect\citeauthoryear{Kolmogorov}{1933}]{kolmogorov1933}
\begin{bbook}[author]
\bauthor{\bsnm{Kolmogorov},~\bfnm{Andre{\u\i}~Nikolaevich}\binits{A.~N.}}
(\byear{1933}).
\btitle{Grundbegriffe der Wahrscheinlichkeitsrechnung}.
\bpublisher{Julius Springer, Berlin}.
\end{bbook}
\endbibitem

\bibitem[\protect\citeauthoryear{Kolmogorov and
  Bharucha-Reid}{2018}]{kolmogorov2018}
\begin{bbook}[author]
\bauthor{\bsnm{Kolmogorov},~\bfnm{Andre{\u\i}~Nikolaevich}\binits{A.~N.}} \AND
  \bauthor{\bsnm{Bharucha-Reid},~\bfnm{Albert~T}\binits{A.~T.}}
(\byear{2018}).
\btitle{Foundations of the theory of probability: Second English Edition}.
\bpublisher{Courier Dover Publications}.
\end{bbook}
\endbibitem

\bibitem[\protect\citeauthoryear{Kyburg and Smokler}{1980}]{kyburg1980studies}
\begin{barticle}[author]
\bauthor{\bsnm{Kyburg},~\bfnm{Henry~E}\binits{H.~E.}} \AND
  \bauthor{\bsnm{Smokler},~\bfnm{Howard~E}\binits{H.~E.}}
(\byear{1980}).
\btitle{Studies in subjective probability.}
\end{barticle}
\endbibitem

\bibitem[\protect\citeauthoryear{Schirokauer and
  Kadane}{2007}]{schirokauer-kadane2007}
\begin{barticle}[author]
\bauthor{\bsnm{Schirokauer},~\bfnm{O.}\binits{O.}} \AND
  \bauthor{\bsnm{Kadane},~\bfnm{J.~B.}\binits{J.~B.}}
(\byear{2007}).
\btitle{Uniform Distributions on the Natural Numbers}.
\bjournal{Journal of Theoretical Probability}
\bvolume{20}
\bpages{429--441}.
\end{barticle}
\endbibitem

\end{thebibliography}

\end{document}